\gdef\section{\@startsection{section}{1}%
  \z@{.7\linespacing\@plus\linespacing}{.5\linespacing}%
  {\normalfont\bfseries\centering}}
\title[Patterns in random permutations avoiding  multiple patterns]
{Patterns in random permutations avoiding some sets of multiple patterns}
\date{17 April, 2018}
\author{Svante Janson}
\thanks{Partly supported by the Knut and Alice Wallenberg Foundation}
\address{Department of Mathematics, Uppsala University, PO Box 480,
SE-751~06 Uppsala, Sweden}
\email{svante.janson@math.uu.se}
\newcommand\urladdrx[1]{{\urladdr{\def~{{\tiny$\sim$}}#1}}}
\subjclass[2010]{60C05; 05A05, 05A16, 60F05} 
\numberwithin{equation}{section}
\renewcommand\le{\leqslant}
\renewcommand\ge{\geqslant}
\newtheorem{theorem}{Theorem}[section]
\newtheorem{prop}[theorem]{Proposition}
\newtheorem{corollary}[theorem]{Corollary}
\theoremstyle{definition}
\newtheorem{example}[theorem]{Example}
\newtheorem{remark}[theorem]{Remark}
\theoremstyle{remark}
\newenvironment{romenumerate}[1][0pt]{
\addtolength{\leftmargini}{#1}\begin{enumerate}
 }{\end{enumerate}}
\newcounter{oldenumi}
{\setcounter{oldenumi}{\value{enumi}}
\begin{romenumerate} \setcounter{enumi}{\value{oldenumi}}}
{\end{romenumerate}}
\newcounter{thmenumerate}
\newcounter{romxenumerate}   
\newcounter{xenumerate}   
\newcommand{\refT}[1]{Theorem~\ref{#1}}
\newcommand{\refR}[1]{Remark~\ref{#1}}
\newcommand{\refS}[1]{Section~\ref{#1}}
\newcommand{\refSs}[1]{Sections~\ref{#1}}
\newcommand{\refSS}[1]{Subsection~\ref{#1}}
\newcommand{\refP}[1]{Proposition~\ref{#1}}
\newcommand{\refPs}[1]{Propositions~\ref{#1}}
\xdef\klockan{\the\count1.0\the\count255}
\xdef\klockan{\the\count1.\the\count255}\fi
\newcommand\nopf{\qed}   
\newcommand\qedtag{\eqno{\qed}}
\newcommand{\sumk}{\sum_{k=0}^\infty}
\newcommand{\sumin}{\sum_{i=1}^n}
\newcommand\set[1]{\ensuremath{\{#1\}}}
\newcommand\bigset[1]{\ensuremath{\bigl\{#1\bigr\}}}
\newcommand\bigpar[1]{\bigl(#1\bigr)}
\newcommand\Bigpar[1]{\Bigl(#1\Bigr)}
\newcommand\lrpar[1]{\left(#1\right)}
\newcommand\xcpar[1]{\{#1\}}
\def\rompar(#1){\textup(#1\textup)}    
\newcommand\xfrac[2]{#1/#2}
\newcommand\Bigparfrac[2]{\Bigpar{\frac{#1}{#2}}}
\def\xexp(#1){e^{#1}}
\newcommand\ntoo{\ensuremath{{n\to\infty}}}
\newcommand\xtoo{\ensuremath{{x\to\infty}}}
\newcommand\punkt{.\spacefactor=1000}    
\newcommand\iid{i.i.d\punkt}    
\newcommand\ie{i.e\punkt}
\newcommand\eg{e.g\punkt}
\newcommand\cf{cf\punkt}
\newcommand{\as}{a.s\punkt}
\newcommand{\tend}{\longrightarrow}
\newcommand\dto{\overset{\mathrm{d}}{\tend}}
\newcommand\pto{\overset{\mathrm{p}}{\tend}}
\newcommand\eqd{\overset{\mathrm{d}}{=}}
\newcommand\Op{O_{\mathrm p}}
\newcommand\bbR{\mathbb R}
\newcounter{CC}
\newcounter{cc}
\newcommand\E{\operatorname{\mathbb E{}}}
\renewcommand\P{\operatorname{\mathbb P{}}}
\newcommand\Var{\operatorname{Var}}
\newcommand\Cov{\operatorname{Cov}}
\newcommand\Bi{\operatorname{Bi}}
\newcommand\Be{\operatorname{Be}}
\newcommand\Ge{\operatorname{Ge}}
\newcommand\Dir{\operatorname{Dir}}
\newcommand\gam{\gamma}
\newcommand\gG{\Gamma}
\newcommand\gl{\lambda}
\newcommand\gL{\Lambda}
\newcommand\gs{\sigma}
\newcommand\gt{\tau}
\renewcommand\phi{\xxx}  
\newcommand\ett[1]{\boldsymbol1\xcpar{#1}}
\newcommand\qw{^{-1}}
\newcommand\qq{^{1/2}}
\newcommand\qqw{^{-1/2}}
\newcommand\intoi{\int_0^1}
\newcommand\oi{[0,1]}
\newcommand\setoi{\set{0,1}}
\newcommand\dd{\,\mathrm{d}}
\newcommand{\pgf}{probability generating function}
\newcommand\lhs{left-hand side}
\newcommand\rhs{right-hand side}
\newcommand\fS{\mathfrak S}
\newcommand\fSn{\fS_n}
\newcommand\fSm{\fS_m}
\newcommand\fSx{\fS_*}
\newcommand\fSxwww{\fSx(\www)}
\newcommand\fSxzzz{\fSx(\zzz)}
\newcommand\npp[1]{n_\perm{#1}}
\newcommand\nt{n_\gt}
\newcommand\ns{n_\gs}
\newcommand\ninv{n_\perm{21}}
\newcommand\perm[1]{\ensuremath{{#1}}}
\newcommand\pint{\pinx{\gt}}
\newcommand\pinT{\pinx{T}}
\newcommand\pinx[1]{\boldsymbol{\pi}_{{#1};n}}
\newcommand\pinxp[1]{\boldsymbol{\pi}_{\perm{#1};n}}
\newcommand\pix{\boldsymbol{\pi}}
\newcommand\pixn{\boldsymbol{\pi}_n}
\newcommand\pinwww{\pinx{\www}}
\newcommand\pinzzz{\pinx{\zzz}}
\newcommand\pinA{\pinx{\permA}}
\newcommand\pinB{\pinx{\permB}}
\newcommand\pinD{\pinx{\permD}}
\newcommand\pinE{\pinx{\permE}}
\newcommand\pinAAA{\pinx{\permAAA}}
\newcommand\pinBBB{\pinx{\permBBB}}
\newcommand\pinCCC{\pinx{\permCCC}}
\newcommand\pinEEE{\pinx{\permEEE}}
\newcommand\permA{\perm{231}, \perm{321}}
\newcommand\permB{\perm{231}, \perm{312}}
\newcommand\permD{\perm{132}, \perm{312}}
\newcommand\permE{\perm{132}, \perm{321}}
\newcommand\permAAA{\perm{231},\perm{312}, \perm{321}}
\newcommand\permBBB{\perm{132},\perm{231}, \perm{321}}
\newcommand\permCCC{\perm{132},\perm{231}, \perm{312}}
\newcommand\permEEE{\perm{132},\perm{213}, \perm{321}}
\newcommand\nx[1]{n_{\perm{#1}}}
\newcommand\ww{\perm{21}}
\newcommand\www{\perm{321}}
\newcommand\zzz{\perm{132}}
\newcommand\bex{\mathbf{e}}
\newcommand\qr{^{\mathsf r}}
\newcommand\qc{^{\mathsf c}}
\newcommand\qs{^{\mathsf s}}
\newcommand\qsx{{\mathsf s}}
\newcommand\iotax{\bar\iota}
\newcommand\fG{\mathfrak G}
\newcommand\Ustat{$U$-statistic}
\newcommand\ggs{\beta}
\newcommand\ggss{\ggs^2}
\newcommand\xix{\xi'}
\newcommand\NN[1]{N_{#1}}
\newcommand\gamx{\gamma}
\newcommand\gamxx{\gamx^2}
\newcommand\Uoi{\mathsf U(0,1)}
\begin{document}

\begin{abstract} 
We consider a random permutation drawn from
the set of  permutations of length $n$ 
that avoid some given set of patterns of length 3.
We show that
the number of occurrences of another pattern $\sigma$ has a limit distribution,
after suitable scaling.
In several cases, the number is asymptotically normal; this contrasts to the
cases of permutations avoiding a single pattern of length 3 studied  in
earlier papers.
\end{abstract}

\maketitle

\section{Introduction}\label{S:intro}

Let $\fS_n$ be the set of permutations of $[n]:=\set{1,\dots,n}$,
and $\fSx:=\bigcup_{n\ge1}\fS_n$.
If $\gs=\gs_1\dotsm\gs_m\in\fS_m$ and $\pi=\pi_1\dotsm\pi_n\in\fS_n$,
then an \emph{occurrence} of $\gs$ in $\pi$ is a 
subsequence $\pi_{i_1}\dotsm\pi_{i_m}$, 
with $1\le i_1<\dots<i_m\le n$, 
that
has the same order as
$\gs$, i.e., 
$\pi_{i_j}<\pi_{i_k} \iff \gs_j<\gs_k$ for all $j,k\in [m]$.
We let $\ns(\pi)$ be the number of occurrences of $\gs$ in $\pi$, and note
that
\begin{equation}\label{11}
  \sum_{\gs\in\fS_m} \ns(\pi) = \binom nm,
\end{equation}
for every $\pi\in\fS_n$.
For example, an inversion is an occurrence of \perm{21}, and thus
$\npp{21}(\pi)$ is the number of inversions in $\pi$.

We say that $\pi$ \emph{avoids} another permutation 
$\tau$ if $\nt(\pi)=0$; otherwise, $\pi$
\emph{contains} $\tau$. Let 
\begin{equation}\label{fsgt}
\fS_n(\tau):= \set{\pi\in\fS_n:\nt(\pi)=0},
\end{equation}
the set 
of permutations of length $n$ that avoid $\tau$.
More generally, for any  set $T=\set{\tau_1,\dots,\tau_k}$ of permutations,
let
\begin{equation}
  \label{fsgtt}
  \fS_n(T)=\fS_n(\tau_1,\dots,\tau_k):=\bigcap_{i=1}^k\fS_n(\tau_i),
\end{equation}
the set of permutations of length $n$ that avoid all $\tau_i\in T$.
We also let $\fSx(T):=\bigcup_{n=1}^\infty\fS_n(T)$ be the set of
$T$-avoiding permutations of arbitrary length. 

The classes $\fSx(\gt)$ and, more generally, 
$\fSx(T)$ 
have been studied
for a long time, see \eg{}
\citet[Exercise 2.2.1-5]{KnuthI}, 
\citet{SS}, 
\citet{Bona}.
In particular, 
one classical  problem is to enumerate the sets
$\fS_n(\gt)$, either exactly or asymptotically,
see \citet[Chapters 4--5]{Bona}.
We note the fact that for any $\tau$ with length $|\tau|=3$,
$\fS_n(\gt)$ has the same size 
$
|\fS_n(\gt)|=C_n:=\frac{1}{n+1}\binom{2n}{n}
$,
the $n$-th Catalan number, see \eg{} 
\cite[Exercises 2.2.1-4,5]{KnuthI}, 
\cite{SS},
\cite[Exercise 6.19ee,ff]{StanleyII},
\cite[Corollary 4.7]{Bona}; furthermore, 
the cases when $T$ consists of several permutations of length 3
are all treated by \citet{SS}.
(The situation for $|\tau|\ge4$ is more complicated.)

The general problem that concerns us is to
take a fixed set $T$ of one or several  permutations 
and let $\pinT$ be a uniformly random $T$-avoiding permutation, \ie, a
uniformly random element of $\fS_n(T)$, and then study the 
distribution of the random variable $\ns(\pinT)$
for some other fixed permutation $\gs$.
(Only $\gs$ that are themselves $T$-avoiding are interesting, 
since otherwise $\ns(\pinT)=0$.)
One instance of this problem was studied already by \citet{RobertsonWZ},
who gave a  generating function for $\npp{123}(\pinxp{132})$.
The exact distribution of $\ns(\pint)$ 
for a given $n$ was studied 
numerically
in \cite{SJ287}, where higher
moments and mixed moments
are calculated for small $n$. 
We are mainly interested in 
asymptotics of the distribution 
of $\ns(\pinT)$, and of its moments,
as \ntoo{}, for some fixed $T$ and $\gs$. 

In the present paper we study the cases when $T$ is a set of two or more
permutations of length 3. The cases when $T=\set\tau$ for a single
permutation $\tau$ of length $|\tau|=3$ 
were studied in \cite{SJ290,SJ324} (by symmetries, see \refS{SSsymm}, 
only two such cases have to be considered), and the cases when $T$ contains a
permutation of length $\le 2$ are trivial (there is then at most one permutation
in $\fS_n(T)$ for any $n$); hence the present paper completes the study of
forbidding one or several permutations of length $\le 3$.
The case of forbidding one or several permutations of length $\ge4$ seems much
more complicated, but there are recent impressive results in some cases by
\citet{Bassino1}
and \citet{Bassino2}.

The 
expectation $\E\ns(\pinT)$, or
equivalently, the total number of 
occurences of $\gs$ in all $T$-avoiding permutations, has 
previously been treated
in a number of papers
for various cases,
beginning with \citet{Bona-abscence,Bona-surprising} (with $\tau=\perm{132}$).
In particular, 
\citet{Zhao} has given exact formulas 
when $|\gs|=3$
for the (non-trivial) cases 
treated in the present paper, where $T$ consist of two or more permutations
of length 3.

\begin{remark}\label{Rnormal}
For
the non-restricted case of uniformly random permutations in $\fS_n$,
it is well-known that if $\pixn$ is a uniformly random permutation in $\fS_n$,
then $\ns(\pixn)$ has an asymptotic normal distribution as \ntoo{}
for every fixed permutation $\gs$; more precisely, if $|\gs|=m$ then,
as \ntoo,
\begin{equation}\label{null}
  \frac{\ns(\pixn)-\frac{1}{m!}\binom nm}{n^{m-1/2}}\dto N\bigpar{0,\gam^2}
\end{equation}
for some $\gam^2>0$ depending on $\gs$;
see \citet{Bona-Normal,Bona3}
and \citet[Theorem 4.1]{SJ287}.

We obtain below similar asymptotic normal results in several cases
(\refSs{SD}, \ref{SB}, \ref{SA}, \ref{S3a}); note that the asymptotic
normality in particular implies concentration in these cases, in the sense
\begin{equation}
\frac{  \ns(\pinT)}{\E \ns(\pinT)}\pto 1.
\end{equation}
On the other hand, in other cases 
(\refSs{S1}, \ref{SE}, \ref{S3c}, \ref{S3b}, \ref{S3e})
we find a different type of limits,
where 
$\xfrac{  \ns(\pinT)}{\E \ns(\pinT)}$ converges to some non-trivial positive
random variable. The same holds in the case $T=\set{2413,3142}$ studied by  
\citet{Bassino1}.

We see no obvious pattern in the occurence of these two types of limits in
the cases below; nor do we know whether these are the only possibilities for
a general set $T$ of forbidden permutations.
\end{remark}

\begin{remark}
  In the present paper we consider  for simplicity often
only univariate limits; corresponding multivariate   results for several 
$\gs_1,\dots,\gs_k$ follow by the same methods. In particular, \eqref{null}
and
all instances
of normal limit laws below  extend to multivariate normal limits,
with covariance matrices that can be computed explicitly.
\end{remark}
  
\begin{remark}
In the present paper we study only the numbers $\ns$ of occurences of some
pattern in $\pint$.
There is also a number of papers by various authors
that study other properties of 
random $\tau$-avoiding permutations, see \eg{} the references in \cite{SJ324};
such results will not be considered here.

\end{remark}

\section{Preliminaries}

\subsection{Notation}

Let $\iota=\iota_n$ be the identity permutation of length $n$.
Let $\iotax_n=n\dotsm21$ be its reversal.

Let $\pi=\pi_1\dotsm\pi_n$ be a permutation.
We say that a value $\pi_i$ is a \emph{maximum}
if $\pi_i>\pi_j$ for every $j<i$, 
and a \emph{minimum} if $\pi_i<\pi_j$ for every $j<i$.
(These are sometimes called \emph{LR maximum} and \emph{LR minimum}.)
Note that $\pi_1$ always is both a maximum and a minimum.

\subsection{Symmetries}\label{SSsymm}
There are many cases treated in the present paper, but the number is
considerably reduced by three natural symmetries (used by many previous
authors). 
For any permutation $\pi=\pi_1\dotsm\pi_n$, define its
\emph{inverse} $\pi\qw$ in the usual way, and its \emph{reversal} and
\emph{complement} by
\begin{align}
\pi\qr&:=\pi\circ\iotax =\pi_n\dotsm\pi_1,  
\\
\pi\qc&:=\iotax\circ\pi =(n+1-\pi_1)\dotsm(n+1-\pi_n).
\end{align}
These three operations are all involutions, and they generate a group 
$\fG$
of 8 symmetries (isomorphic to the dihedral group $D_4$).
It is easy to see that, for any permutations $\gs$ and $\pi$,
\begin{equation}\label{symm1}
  n_{\gs\qw}(\pi\qw)=  n_{\gs\qr}(\pi\qr) = n_{\gs\qc}(\pi\qc) =   n_{\gs}(\pi),
\end{equation}
and 
consequently, for any
symmetry $\qsx\in\fG$, 
\begin{equation}\label{symm}
  n_{\gs\qs}(\pi\qs) =   n_{\gs}(\pi).
\end{equation}
For a set $T$ of permutations we define $T\qs:=\set{\tau\qs:\tau\in T}$.
It follows from \eqref{symm} that
\begin{equation}
  \fS_n(T\qs)=\set{\pi\qs:\pi\in\fS_n(T)},
\end{equation}
and, furthermore, that for any permutation $\gs$,
\begin{equation}\label{symmT}
  n_{\gs\qs}(\pinx{T\qs})\eqd \ns(\pinT).
\end{equation}
We say that the sets of forbidden permutations $T$ and $T\qs$ are
\emph{equivalent}, 
and note that \eqref{symmT} implies that
it suffices to consider one set $T$ in each equivalence class 
$\set{T\qs:\qsx\in\fG}$. 
We do this in the sequel without further comment.
(We choose representatives $T$ that we find convenient. One guide is that we
choose $T$ such that the identity permutation $\iota_n$ avoids $T$.)

\subsection{Compositions and decompositions of permutations}
\label{SSblocks}
If $\gs\in\fS_m$ and $\tau\in\fS_n$,  their \emph{composition}
$\gs*\tau\in\fS_{m+n}$ is defined
by letting $\tau$ act on $[m+1,m+n]$ in the natural
way; more formally,
$\gs*\tau=\pi\in\fS_{m+n}$ where $\pi_i=\gs_i$ for $1\le i\le m$, and
$\pi_{j+m}=\tau_j+m$ for $1\le j\le n$. 
It is easily seen that $*$ is an associative operation that makes $\fS_*$
into a semigroup (without unit, since we only consider permutations of
length $\ge1$). We say that a permutation $\pi\in\fS_*$  is
\emph{decomposable} if $\pi=\gs*\tau$ for some $\gs,\tau\in\fS_*$, and
\emph{indecomposable} otherwise;
we also call an indecomposable permutation a
\emph{block}.
Equivalently, $\pi\in\fS_n$ is decomposable if and only if $\pi:[m]\to[m]$
for some $1\le m<n$. 
See \eg{}  \cite[Exercise VI.14]{Comtet}.

It is easy to see that any permutation $\pi\in\fS_*$ has a unique
decomposition $\pi=\pi_1*\dots*\pi_\ell$  into indecomposable permutations
(blocks)  $\pi_1,\dots,\pi_\ell$ (for some, unique, $\ell\ge1$); 
we call these the \emph{blocks of $\pi$}.

We shall see that some (but not all) of the classes considered below can be
characterized in terms of their blocks. (See \cite{Bassino1} for another,
more complicated, example.)

\subsection{\Ustat{s}}\label{SSU}

An (asymmetric) \emph{\Ustat} is a random variable of the form
\begin{equation}\label{U}
  U_n = \sum_{1\le i_1<\dots<i_d\le n} f\bigpar{X_{i_1},\dots,X_{i_d}},
\qquad n\ge0,
\end{equation}
where $X_1,X_2,\dots$ is an \iid{} sequence of random variables and $f$ is a
given function of $d\ge1$ variables.
These were (in the symmetric case) introduced by \citet{Hoeffding};
see further \eg{} \cite{SJ332} and the references there.
We say that $d$ is the \emph{order} of the \Ustat.

We shall use the central limit theorem for \Ustat{s}, originally due to
\citet{Hoeffding}, in the asymmetric version given in 
\cite[Theorem 11.20]{SJIII} and 
\cite[Corollary 3.5 and (moment convergence) Theorem 3.15]{SJ332}. 
Let, with $X$ denoting a
generic $X_i$,
\begin{align}
  \mu&:=\E f(X_1,\dots,X_d), \label{Umu}
\\
f_i(x)&:=\E\bigpar{f(X_1,\dots,X_d)\mid X_i=x}, 
\\
\ggs_{ij}&:=\Cov \bigpar{f_i(X),f_j(X)},
\\\label{Ugss}
      \ggss&:=
\sum_{i,j=1}^d
\frac{(i+j-2)!\,(2d-i-j)!}{(i-1)!\,(j-1)!\,(d-i)!\,(d-j)!\,(2d-1)!}\ggs_{ij}
.
\end{align}
Note that $f_i(x)$ in \cite{SJ332} is  $f_i(x)-\mu$ in the present notation.

\begin{prop}[\cite{SJIII,SJ332}]\label{PU}
Suppose that $f(X_1,\dots,X_d)\in L^2$.
Then, 
with the notation in \eqref{Umu}--\eqref{Ugss},
as \ntoo,
\begin{equation}\label{c1}
  \frac{U_n-\binom nd \mu}{n^{d-1/2}} \dto N\bigpar{0,\ggss}.
\end{equation}
Furthermore, $\ggss>0$ unless $f_i(X)=\mu$
\as{} for $i=1,\dots,d$.

Moreover, if $f(X_1,\dots,X_d)\in L^p$ for some $p\ge2$, 
the \eqref{c1} holds with convergence of all moments of order $\le p$.
\qed
\end{prop}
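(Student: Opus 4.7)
The plan is to adapt Hoeffding's projection method to the asymmetric setting. By replacing $f$ with $f-\mu$ I reduce to the centred case $\mu=0$; this shifts $U_n$ by $\binom nd\mu$, matching the centering in \eqref{c1}, and each $f_i$ becomes zero-mean. The strategy is to approximate $U_n$ in $L^2$ by its orthogonal projection onto the span of functions of a single $X_k$.

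Since $\E\bigpar{f(X_{j_1},\dots,X_{j_d})\mid X_k}$ equals $f_i(X_k)$ when $k=j_i$ and vanishes otherwise, summing over ordered $d$-tuples $j_1<\dots<j_d$ in $[n]$ with $j_i=k$ gives the projection
\begin{equation*}
\tilde U_n := \sum_{k=1}^n \sum_{i=1}^d \binom{k-1}{i-1}\binom{n-k}{d-i}\,f_i(X_k),
\end{equation*}
a sum of independent zero-mean variables. The coefficients grow like $n^{d-1}\,x^{i-1}(1-x)^{d-i}/\bigpar{(i-1)!(d-i)!}$ with $x=k/n$, so the Lindeberg--Feller CLT applies and, after rescaling by $n^{-(2d-1)}$, the variance converges to a Riemann integral. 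The beta-integral identity $\int_0^1 x^{i+j-2}(1-x)^{2d-i-j}\dd x = (i+j-2)!(2d-i-j)!/(2d-1)!$ then yields $n^{-(d-1/2)}\tilde U_n \dto N(0,\ggss)$ with $\ggss$ exactly as in \eqref{Ugss}.

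To finish I need $(U_n-\tilde U_n)/n^{d-1/2}\pto 0$. For this I would use the full Hoeffding decomposition $f=\sum_{\emptyset\ne S\subseteq[d]}g_S$ into orthogonal components, where $g_S$ depends only on $\set{x_i:i\in S}$ and is degenerate in each of its variables. The projection $\tilde U_n$ captures the $|S|=1$ contributions, while each component of order $r=|S|\ge 2$, summed over placements inside $d$-subsets of $[n]$, is an orthogonal sum whose variance is $O(n^{2d-r})=O(n^{2d-2})$. Hence $\Var(U_n-\tilde U_n)=O(n^{2d-2})$, and Slutsky gives \eqref{c1}. Non-degeneracy $\ggss>0$ fails precisely when $\ggs_{ii}=0$ for every $i$, equivalently $f_i(X)=\mu$ a.s.\ for all $i$.

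The main obstacle is the bookkeeping for the asymmetric coefficients: the cross-variance sum $\sum_k \binom{k-1}{i-1}\binom{k-1}{j-1}\binom{n-k}{d-i}\binom{n-k}{d-j}$ must be shown, via the above Riemann-sum/beta-integral identity, to produce exactly the combinatorial weights in \eqref{Ugss}. For the moment-convergence extension when $f\in L^p$, Marcinkiewicz--Zygmund yields $\E|\tilde U_n|^p=O(n^{p(d-1/2)})$, and analogous $L^p$ bounds for each higher-order Hoeffding component give $\E|U_n/n^{d-1/2}|^p=O(1)$; this furnishes uniform integrability of $|U_n/n^{d-1/2}|^q$ for $q<p$, which upgrades distributional convergence to convergence of all moments of order $\le p$.
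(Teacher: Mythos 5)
The paper does not actually prove \refP{PU}: it is quoted from \cite{SJIII} and \cite{SJ332} with a bare citation, so your argument is not so much a different route as a reconstruction of the standard proof of the cited result, and as such it is essentially sound. The projection $\tilde U_n=\sum_{k}\sum_{i}\binom{k-1}{i-1}\binom{n-k}{d-i}f_i(X_k)$ is the correct conditional expectation onto single coordinates, the Riemann-sum/beta-integral computation does reproduce the weights in \eqref{Ugss}, and the bound $\Var(U_n-\tilde U_n)=O(n^{2d-2})$ is valid because the degenerate Hoeffding components attached to distinct index sets are orthogonal. Your non-degeneracy claim is also correct but deserves one more line: setting $\phi_i(x)=x^{i-1}(1-x)^{d-i}/((i-1)!\,(d-i)!)$, your own computation shows $\ggss=\int_0^1\Var\bigl(\sum_{i}\phi_i(x)f_i(X)\bigr)\dd x$, and since the $\phi_i$ are linearly independent polynomials this integral vanishes only if every $f_i(X)$ is a.s.\ equal to $\mu$; this is the cleanest way to justify the equivalence you assert.

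The one genuine gap is at the endpoint of the moment-convergence claim. From $\sup_n\E|U_n/n^{d-1/2}|^p<\infty$ you get uniform integrability of the $q$-th powers only for $q<p$, hence convergence of moments of order strictly less than $p$; the proposition asserts convergence of moments of order $\le p$, including $p$ itself. To close this you need uniform integrability of the $p$-th powers, which a uniform $L^p$ bound alone does not provide. The standard fix is truncation: write $f=f\ett{|f|\le M}+f\ett{|f|>M}$, observe that the truncated part has moments of all orders so its normalized U-statistic converges with all moments, and apply your Marcinkiewicz--Zygmund estimates to the tail part to show its contribution to the normalized $L^p$ norm is at most a constant times $\bigl(\E|f|^p\ett{|f|>M}\bigr)^{1/p}$ uniformly in $n$; letting $M\to\infty$ then gives the required uniform integrability. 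With that addition the argument is complete.
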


\begin{example}\label{Eunrestricted}
  A uniformly random permutation 
$\pix_n$ of length $n$  (without other restrictions) can be constructed as
the relative order of $X_1,\dots,X_n$, where $X_i$ are \iid{} with, for
example, a uniform distribution $\Uoi$.
For any given permutation $\gs\in\fS_m$, we can then write $\ns(\pix_n)$ as
a \Ustat{} \eqref{U} for a suitable indicator function $f$.
Then \refP{PU} yields a limit theorem showing that $\ns(\pix_n)$ is
asymptotically normal. See \cite{SJ287} for details. 
\end{example}

We shall also use a renewal theory version of \refP{PU}.
With the notations above,
assume (for simplicity) that $X_i\ge0$.
Define $S_n:=\sumin X_i$, and let for each $x>0$
\begin{align}
\NN-(x)&:=\sup\set{n:S_n< x},\label{NN-}
\\ 
\NN+(x)&:=\inf\set{n:S_n\ge x}=\NN-(x)+1. \label{NN+}
\end{align}
\begin{remark}\label{RNN}
  The definitions \eqref{NN-}--\eqref{NN+} differ slightly from the ones in
  \cite{SJ332}, where instead $S_n\le x$ and $S_n>x$ are used. This does not
  affect the asymptotic results used here.
Note that the event $\set{S_k=n \text{ for some }k\ge0}$ equals
\set{S_{\NN+(n)}=n} in the present notation.
\end{remark}

 The following results are special cases of
\cite[Theorems  3.11, 3.13(iii) and 3.18]{SJ332} (with somewhat different
notation). 
$\NN\pm(x)$ means either $\NN-(x)$ or $\NN+(x)$; the results holds for both.

\begin{prop}[\cite{SJ332}]\label{PUN}
\newcommand{\td}{d}
\newcommand{\tf}{f}
\newcommand{\tU}{U}
\newcommand{\tmu}{\mu}
\newcommand{\mux}{\nu}
Suppose that $f(X_1,\dots,X_d)\in L^2$, $X\in L^2$, $X\ge0$ \as, and
$\mux:=\E X>0$.
Then, with notations as above, 
as \xtoo,
\begin{equation}\label{cvtau}
\frac{\tU_{\NN\pm(x)}-{\mux}^{-\td}{\tmu}{\td!}\qw x^{\td}}
{x^{\td-1/2}} 
\dto N\bigpar{0,\gamx^2},
\end{equation}
where
\begin{align}\label{cvtau2}
    \gamx^2
&:=
{\mux}^{1-2\td}
\ggss
-2\frac{{\mux}^{-2\td}\tmu}{(\td-1)!\,\td!}\sum_{i=1}^{\td}
\Cov\bigpar{\tf_i(X),X}
+ \frac{{\mux}^{-2\td-1}\tmu^2}{(\td-1)!^2}\Var\bigpar{X}.
\end{align}
Moreover,
$\gamx^2>0$ unless  $f_i(X)=\frac{\mu}{\nu} X$ \as{} for $i=1,\dots,d$.
\qed  
\end{prop}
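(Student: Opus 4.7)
The plan is to combine the \Ustat{} CLT of \refP{PU} with a renewal CLT for $N_\pm(x)$, glued together by an Anscombe-type uniform control of \Ustat{} increments. First, I would set $n_x := \lfloor x/\mux \rfloor$ as the deterministic first-order approximation of $N_\pm(x)$, and recall the standard renewal expansion
$$N_\pm(x) - n_x = -\mux\qw\bigpar{S_{n_x} - x} + \op\bigpar{\sqrt{x}},$$
which holds under the hypotheses $\mux > 0$ and $X \in L^2$. Next, I would invoke the multivariate version of \refP{PU} applied jointly to the pair $(U_n, S_n)$ — viewing $S_n$ as a trivial order-one \Ustat{} with kernel $x \mapsto x$ and $U_n$ as the given order-$d$ \Ustat{} — to obtain
$$\Bigl(n_x^{-(d-1/2)}\bigpar{U_{n_x} - \tbinom{n_x}{d}\mu},\ n_x\qqw\bigpar{S_{n_x} - n_x\mux}\Bigr) \dto (Z_1, Z_2),$$
with covariance structure computable from $\Cov(f_i(X), X)$ and $\Var(X)$ via a formula analogous to \eqref{Ugss}.

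Then I would bridge the deterministic index $n_x$ to the random index $N_\pm(x)$. Writing
$$U_{N_\pm(x)} - U_{n_x} = \sum_{n=n_x}^{N_\pm(x)-1} \sum_{1 \le i_1 < \dots < i_{d-1} \le n} f\bigpar{X_{i_1},\dots,X_{i_{d-1}},X_{n+1}},$$
the leading contribution is $\binom{n_x}{d-1}\mu\,\bigpar{N_\pm(x) - n_x}$, while the centered fluctuation is $\op(x^{d-1/2})$ since $|N_\pm(x) - n_x| = \OO(\sqrt{x})$ caps the typical random fluctuation at order $\sqrt{\sqrt{x}}\cdot x^{d-1} = x^{d-3/4}$. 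Combining this with the trivial observation that $\binom{n_x}{d}\mu - \mux^{-d}\mu\,x^d/d! = O(x^{d-1}) = o(x^{d-1/2})$ (because $n_x - x/\mux = O(1)$), I get
$$\frac{U_{N_\pm(x)} - \mux^{-d}\mu\,x^d/d!}{x^{d-1/2}} \;=\; \mux^{-(d-1/2)} Z_1 \;-\; \frac{\mu\,\mux^{-(d+1/2)}}{(d-1)!}\, Z_2 \;+\; \op(1).$$
Reading off the variance of this bivariate-Gaussian linear combination directly yields \eqref{cvtau2}; the non-degeneracy statement then reduces to the assertion that the limiting linear combination is degenerate iff $f_i(X) - \mu$ is proportional to $X - \mux$ for each $i$, which by matching means gives $f_i(X) = (\mu/\mux)X$ a.s.

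The main obstacle is the Anscombe-type uniform control: because $N_\pm(x)$ is a stopping time for the same iid sequence that drives $U_n$, substituting a random index into the \Ustat{} CLT is not automatic and requires a maximal inequality of the form
$$\max_{|n - n_x| \le K\sqrt{x}} \bigabs{\bigpar{U_n - U_{n_x}} - (n-n_x)\tbinom{n_x}{d-1}\mu} = \op\bigpar{x^{d-1/2}}.$$
This is where the $L^2$-hypothesis on $f(X_1,\dots,X_d)$ is actually used, via Doob's inequality applied to the centered summands in the Hoeffding decomposition of the increments $U_{n+1} - U_n$; this step replaces the entire technical core of the proof in \cite{SJ332}.
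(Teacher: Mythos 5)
Your proposal addresses a statement that this paper does not actually prove: \refP{PUN} is imported verbatim from the companion paper \cite{SJ332} (it is stated as a special case of Theorems 3.11, 3.13(iii) and 3.18 there) and carries only a \qed{} here, so you have reconstructed an argument where the paper offers a citation. Your outline is the natural one and its bookkeeping checks out: with $n_x=\lfloor x/\nu\rfloor$, the joint CLT for $\bigpar{U_{n_x},S_{n_x}}$ has asymptotic cross-covariance $\frac{1}{d!}\sum_{i=1}^d\Cov\bigpar{f_i(X),X}$ (by the Vandermonde identity $\sum_k\binom{k-1}{i-1}\binom{n-k}{d-i}=\binom nd$ applied to the first-order Hoeffding projections), and substituting $N_\pm(x)-n_x=-\nu\qw(S_{n_x}-x)+\op(\sqrt x)$ into the linearized increment $\binom{n_x}{d-1}\mu\bigpar{N_\pm(x)-n_x}$ reproduces exactly the three terms of \eqref{cvtau2}; the $d=1$ sanity check $\gamma^2=\nu\qw\Var\bigpar{f(X)-\frac{\mu}{\nu}X}$ confirms the signs. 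The route in \cite{SJ332} is essentially a functional version of the same idea (joint convergence of the processes $t\mapsto U_{\lfloor nt\rfloor}$ and $t\mapsto S_{\lfloor nt\rfloor}$, followed by inversion at the first-passage time), whereas yours is the pointwise Anscombe version; the two need the same uniform control of increments, and yours is marginally more elementary at the cost of redoing that control by hand.

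Two caveats, neither fatal. First, the maximal inequality you defer to is the entire technical content, and ``Doob applied to the Hoeffding decomposition'' is not a one-liner: $U_n-\E U_n$ is not a martingale in $n$ because the projection weights $\binom{k-1}{i-1}\binom{n-k}{d-i}$ depend on $n$, so one must pass to the completely degenerate Hoeffding components (which are forward martingales in each index) or use a chaining bound; moreover $N_\pm(x)$ is a stopping time of the same sequence, so the increment over $[n_x,N_\pm(x)]$ is not independent of $U_{n_x}$, and the uniform-over-$|n-n_x|\le K\sqrt x$ formulation you state is indeed the correct fix rather than a direct random-index substitution. Second, your non-degeneracy argument needs one more line: degeneracy forces, for a.e.\ $t\in(0,1)$, the combination $\sum_i\frac{t^{i-1}(1-t)^{d-i}}{(i-1)!\,(d-i)!}\bigpar{f_i(X)-\mu}$ to equal the corresponding multiple of $X-\nu$, and it is the linear independence of these polynomials in $t$ that pins each $f_i(X)-\mu$ to the single constant $\mu/\nu$ times $X-\nu$, rather than to arbitrary proportionality constants as your phrasing suggests.
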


\begin{prop}[\cite{SJ332}] \label{PUN2}
Suppose in addition to the hypotheses in \refP{PUN} that $X$ is
integer-valued.
Then \eqref{cvtau} holds also conditioned on $S_{\NN+(x)}=x$ 
(\cf{} \refR{RNN})
for integers $x\to\infty$.
\qed
\end{prop}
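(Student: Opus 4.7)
My plan is to deduce \refP{PUN2} from \refP{PUN} via an asymptotic independence argument, implemented through a local limit theorem for the integer-valued walk $S_n$. The key point is that the event being conditioned on has positive limiting probability and involves only a bounded window of the walk near the stopping time, whereas the CLT for $U_{\NN+(x)}$ is a global statement.

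First, I would strengthen \refP{PUN} to a joint limit law for the pair
\begin{equation}
\Bigpar{\frac{U_{\NN+(x)}-\nu^{-d}\mu\,(d!)\qw x^d}{x^{d-1/2}},\; S_{\NN+(x)}-x},
\end{equation}
showing that the two coordinates become asymptotically independent. The first coordinate converges to $N(0,\gamx^2)$ by \refP{PUN}. For integer-valued $X$ with $\nu>0$, the overshoot $S_{\NN+(x)}-x$ converges in distribution, as $x\to\infty$ along integers, to the discrete stationary overshoot $R$ on $\bbZ_{\ge0}$ with $\P(R=0)>0$, by the elementary renewal theorem on the integer lattice. The asymptotic independence comes from the Hoeffding decomposition: up to lower-order terms, $U_n-\binom nd\mu$ is a linear combination of the partial sums $\sum_{j=1}^{n}\bigpar{f_i(X_j)-\mu}$, $i=1,\dots,d$, so the large-scale behavior of $(U_n,S_n)$ is governed by a bivariate random walk with increments $\bigpar{\phi(X_j),\,X_j}$ for an explicit $\phi$. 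The normalized first coordinate is of order $x^{d-1/2}$ and depends on the whole walk, while the overshoot depends (up to negligible terms) only on the last increment $X_{\NN+(x)}$; this is why they decouple in the limit.

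Given joint convergence, \refP{PUN2} follows immediately: conditioning on $\bigcpar{S_{\NN+(x)}=x}=\set{R=0}$, an event of limiting probability $\P(R=0)>0$, extracts a marginal of the joint limit in which the normalized U-statistic is still $N(0,\gamx^2)$ by the portmanteau theorem.

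The main obstacle is making the decoupling rigorous. The cleanest route is a direct local-limit calculation: write
\begin{equation}
\P\Bigpar{\text{CLT event}\,\Big|\,S_{\NN+(x)}=x}
=\frac{\sum_{n\ge1}\P\bigpar{\text{CLT event at step }n,\,S_{n-1}<x,\,S_n=x}}{\sum_{n\ge1}\P\bigpar{S_{n-1}<x,\,S_n=x}},
\end{equation}
condition on the last increment $X_n$ in each summand, and then combine the local CLT for $S_{n-1}$ in a window of size $O(\sqrt n)$ around $n\approx x/\nu$ with the joint CLT of \refP{PU} applied to the bivariate vector consisting of the U-statistic and $S_{n-1}$ at deterministic $n$. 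The local CLT contributions to numerator and denominator cancel, leaving the desired normal probability. The technical care needed is in showing that summation over $n$ preserves the Gaussian scaling limit uniformly on the relevant range of $n$, and that tail contributions from $|n-x/\nu|\gg\sqrt{x}$ are negligible --- both of which are standard under the $L^2$ assumptions on $f$ and $X$.
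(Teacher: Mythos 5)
First, a point of reference: the paper does not prove this proposition at all --- it is imported verbatim (with its \qed) as a special case of \cite[Theorems 3.11, 3.13(iii) and 3.18]{SJ332}, so there is no internal proof to compare yours against. Judged on its own terms, your overall strategy --- show that the global CLT of \refP{PUN} is asymptotically independent of the local event $\set{S_{\NN+(x)}=x}$, whose probability tends to a positive constant by the lattice renewal theorem, then condition --- is the right one and is how conditioned renewal CLTs are generally established.

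However, one step of your ``cleanest route'' would fail if executed as written. You cannot let ``the local CLT contributions to numerator and denominator cancel'': for each fixed $n$ in the bulk window $|n-x/\nu|=O(\sqrt x)$, the event $\set{S_{n-1}=x-k}$ and the CLT event for the U-statistic are \emph{strongly dependent}, because pinning $S_{n-1}=x-k$ fixes $S_{n-1}-(n-1)\nu$ at a value of order $\sqrt x$, and through $\Cov\bigpar{f_i(X),X}$ this shifts the mean of the H\'ajek projection of $U_{n-1}$ by $\Theta(\sqrt x)$ --- the same order as its standard deviation --- and reduces its variance. The conditional law of the normalized U-statistic given $S_{n-1}=x-k$ is thus a \emph{different} Gaussian for each $n$; the limit $N(0,\gamx^2)$ is only recovered after the weighted average over $n$ (weights proportional to $\P(S_{n-1}=x-k)$, a discretized Gaussian in $(n-x/\nu)/\sqrt x$) reassembles the total variance by the tower property --- this is exactly the three-term structure of \eqref{cvtau2}. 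Moreover, the lemma you actually need is a mixed local--global limit theorem (local in the integer coordinate $S_{n-1}$, weak in the U-statistic coordinate, uniformly over the window and over bounded $k$), which requires aperiodicity or explicit bookkeeping on the span of $X$ and is the genuine technical content, not a routine consequence of the $L^2$ hypotheses. A variant that avoids all of this: take $n_1=\floor{x/\nu}-\floor{x^{3/4}}$ and check that the normalized statistic in \eqref{cvtau} equals a function of $X_1,\dots,X_{n_1}$ plus $\op(1)$ (the centered increment of $U$ over $(n_1,\NN+(x)]$ has standard deviation $O(x^{d-5/8})$, and $\NN+(x)-x/\nu$ is determined by $S_{n_1}$ up to $\op(\sqrt x)$); then $\P\bigpar{S_{\NN+(x)}=x\mid X_1,\dots,X_{n_1}}=u(x-S_{n_1})$ with $u$ the renewal mass function, which tends to a positive constant by Blackwell's theorem, and dominated convergence gives the asymptotic independence with no local CLT for the pair.
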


\begin{prop}[\cite{SJ332}] \label{PUNmom}
Suppose in addition to the hypotheses in \refP{PUN} or \ref{PUN2} 
that 
$f(X_1,\dots,X_d)\in L^p$ and  $X\in L^p$ for every $p<\infty$.
Then the conclusion \eqref{cvtau} holds 
with convergence of all moments.
\qed
\end{prop}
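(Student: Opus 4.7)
The plan is to upgrade the distributional convergence already established in \refPs{PU} (or \ref{PUN2}) to convergence of all moments by the standard recipe: convergence in distribution plus uniform $L^q$-boundedness for every $q<\infty$ implies convergence of every moment. Thus it suffices to prove that, for every $p<\infty$,
\begin{equation}\label{eq:unifp}
\sup_{x\ge 1}\E\biggabs{\frac{U_{\NN\pm(x)}-\mux^{-d}\mu(d!)\qw x^d}{x^{d-1/2}}}^p<\infty.
\end{equation}

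First, I would collect two classical moment bounds. For a deterministic index $n$, the Hoeffding/Hajek martingale decomposition of $U_n$, combined with Rosenthal's inequality applied to each of the $d$ levels of the Hoeffding projection, gives under $f(X_1,\dots,X_d)\in L^p$ that $\bignorm{U_n-\binom nd\mu}_p\le C_p n^{d-1/2}$, uniformly in $n$. For the renewal index, since $X\in L^p$ with $\mu_X:=\E X>0$, standard renewal moment estimates (again Rosenthal applied to the random walk $S_n$, together with Wald-type identities) yield $\bignorm{\NN\pm(x)-x/\mux}_p\le C_p x^{1/2}$ and hence $\E\NN\pm(x)^q\le C_q x^q$ for each $q<\infty$.

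Next, I would combine these via the decomposition
\begin{equation*}
U_{\NN\pm(x)}-\frac{\mu x^d}{d!\,\mux^d}
=\Bigpar{U_{\NN\pm(x)}-\tbinom{\NN\pm(x)}{d}\mu}+\mu\Bigpar{\tbinom{\NN\pm(x)}{d}-\frac{x^d}{d!\,\mux^d}}.
\end{equation*}
For the first bracket, since $\NN+(x)$ is a stopping time for the natural filtration of $(X_i)$, one can either truncate on the event $\set{\NN\pm(x)\le Ax}$ (whose complement has $\le C_q x^{-q}$ probability for every $q$, by the renewal moment bound) and use the deterministic-$n$ bound on $\set{n\le Ax}$, or else invoke a maximal $L^p$ inequality for the U-statistic martingale and optional sampling. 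Either way this term has $L^p$-norm $O(x^{d-1/2})$. For the second bracket, Taylor-expand $y\mapsto\binom yd$ around $y=x/\mux$: the linear term contributes $O(x^{d-1})\cdot(\NN\pm(x)-x/\mux)$ of $L^p$-norm $O(x^{d-1/2})$, and the remainder is $O(x^{d-2})\cdot(\NN\pm(x)-x/\mux)^2$ which, by the renewal moment bound with exponent $2p$, has $L^p$-norm $O(x^{d-1})$. Summing gives \eqref{eq:unifp}, and together with the distributional convergence this yields convergence of every moment. The conditional version under $\refP{PUN2}$ follows from the same estimates together with the local limit theorem bound $\P(S_{\NN+(x)}=x)\asymp x^{-1/2}$, which converts unconditional $L^p$-bounds to conditional ones at a uniformly bounded cost on each deterministic power.

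The main obstacle is the first term above: $\NN\pm(x)$ and the U-statistic are built from the same sequence $(X_i)$, so one cannot simply condition on $\NN\pm(x)$ and apply the deterministic $L^p$ bound. Handling this properly — either by a truncation/union-bound argument with the renewal tail estimates, or by a martingale/optional-stopping argument for the Hoeffding decomposition — is where the real work lies; everything else is bookkeeping with Rosenthal's inequality and elementary Taylor expansion.
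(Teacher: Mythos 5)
The paper does not prove this proposition at all: Propositions \ref{PUN}--\ref{PUNmom} are quoted as special cases of \cite[Theorems 3.11, 3.13(iii) and 3.18]{SJ332}, so there is no in-paper argument to compare against. Your overall strategy --- establish $\sup_{x}\E|W_x|^{p}<\infty$ for the normalized quantity $W_x$ for every $p$, and combine with the already-known convergence in distribution to get convergence of all moments --- is the standard and correct route, and your decomposition into the stopped $U$-statistic fluctuation $U_{\NN\pm(x)}-\binom{\NN\pm(x)}{d}\mu$ plus the fluctuation of $\binom{\NN\pm(x)}{d}$ around $x^d/(d!\,\nu^d)$, handled by a maximal inequality for the Hoeffding levels together with renewal moment bounds of Gut type, is exactly the right machinery. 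You also correctly flag the genuine difficulty (the dependence between $\NN\pm(x)$ and the summands), and the dyadic truncation you allude to does close it.

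There is, however, one concretely wrong step: the claim that $\P\bigpar{S_{\NN+(x)}=x}\asymp x^{-1/2}$. The event $\set{S_{\NN+(x)}=x}$ is the event that $x$ is a renewal epoch, and for an aperiodic integer-valued $X$ with mean $\nu>0$ the Erd\H{o}s--Feller--Pollard renewal theorem gives $\P\bigpar{S_{\NN+(x)}=x}\to 1/\nu>0$; it does not decay. This matters for your own argument: if the conditioning event really had probability of order $x^{-1/2}$, then the crude conversion $\E\bigsqpar{|W_x|^p\mid A_x}\le \E|W_x|^p/\P(A_x)$ would pick up an unbounded factor $x^{1/2}$ (and \Holder{} does not remove it), so the uniform moment bound for the conditional version of \eqref{cvtau} would not follow as you describe. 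With the correct fact that $\P(A_x)$ is bounded away from $0$, the conversion is immediate and your proof of the \refP{PUN2} case goes through --- so the error is fixable, and indeed the fix makes the step easier, but as written that step is false and the argument built on it would not give the claimed uniform bounds.
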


\subsection{Trivial cases}\label{S0}

We consider in the present paper
sets $T\subseteq\fS_3$. Then, see \cite{SS}, the following cases
are trivial in the sense that for all $n\ge5$, $|\fSn(T)|=0$, 1 or 2.
\begin{romenumerate}
\item 
$T=\set{123,321}$,
\item 
$|T|=3$ and $T\supset\set{123,321}$,
\item 
$|T|\ge4$.
\end{romenumerate}
We ignore these cases in the sequel. This leaves $6$ cases with $|T|=1$
(\refS{S1}),
14 cases with $|T|=2$ (\refSs{SD}--\ref{SE}),
and 16 cases with $|T|=3$
(\refSs{S3a}--\ref{S3e}). Symmetries reduce these to the $2+4+4=10$
non-equivalent cases discussed below.

\section{Avoiding a single permutation of length 3}\label{S1}

There are 6 cases where a single permutation of length 3 is avoided,
but by the symmetries in \refSS{SSsymm} these reduce to 2 non-equivalent
cases, for example 132  (equivalent to 231, 213, 312)
and 321 (equivalent to 123).
These cases are treated in detail in \cite{SJ290} and \cite{SJ324},
respectively.
Both analyses are based on bijections with binary trees and Dyck paths, and
the well-known convergence in distribution of random Dyck paths to a
Brownian excursion, but the details are very different, and so are in
general the resulting limit distributions.

For comparison with the results in later sections, 
we quote the main results of \cite{SJ290} and \cite{SJ324},
referring to these papers for further details and proofs.
Recall that the standard Brownian excursion $\bex(x)$ is a random
non-negative function on $\oi$.

First, for 132, let
\begin{equation}\label{gl}
  \gl(\gs):=|\gs|+D(\gs)
\end{equation}
where $D(\gs)$ is the number of 
\emph{descents} in $\gs$, \ie, indices
$i$ such that $\gs_i>\gs_{i+1}$ or (as a convenient convention) $i=|\gs|$.
Note that  $1\le D(\gs)\le|\gs|$, and thus
\begin{equation}
|\gs|+1\le\gl(\gs)\le 2|\gs|,  
\end{equation}
with the extreme values $\gl(\gs)=|\gs|+1$ if and only if $\gs=1\dotsm k$,
and
$\gl(\gs)=2|\gs|$ if and only if $\gs=k\dotsm 1$, for some $k=|\gs|$.

\begin{theorem}[\cite{SJ290}]\label{T132}
There exist 
strictly positive random variables $\gL_\gs$
such that
\begin{equation}\label{tmaind}
 \ns(\pinzzz)/ n^{\gl(\gs)/2} \dto \gL_\gs,
\end{equation}
as \ntoo, jointly for all $\gs\in\fSxzzz$. 
Moreover, this holds with convergence of
all moments. 

For a monotone decreasing permutation $k\dotsm1$,  
$\gL_{k\dotsm 1}=1/k!$ is deterministic, but not for any other $\gs$.
\nopf
\end{theorem}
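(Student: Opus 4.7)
The plan is to use the classical bijection between $\fSnzzz$ and the set $\Dyckn$ of Dyck paths of length $2n$; under it, the uniform measure on $\fSnzzz$ pushes forward to the uniform measure on $\Dyckn$, and by a standard invariance principle the rescaled path $X_n(t):=n\qqw D_n(2nt)$ converges in distribution in $\coi$ to $2\bex(t)$. The overall strategy is to express $\ns(\pinzzz)$ as an approximately continuous functional of $X_n$, apply the continuous mapping theorem to read off the limit, and then upgrade to moment convergence by uniform integrability.

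To count $\ns(\pinzzz)$ in terms of the Dyck path, I would first decompose $\gs$ into its $d:=D(\gs)$ maximal increasing runs $\gs=\rho_1\rho_2\dotsm\rho_d$ of lengths $\ell_1,\dots,\ell_d$ (so $\ell_1+\dots+\ell_d=|\gs|=:m$). An occurrence of $\gs$ in $\pi\in\fSnzzz$ corresponds to specifying, for each run $\rho_j$, the $\ell_j$ positions in $\pi$ realizing it, subject to order constraints within runs and across each of the $d$ descent boundaries. Within the Dyck-path encoding, each within-run positional choice contributes roughly $n\qq$ via Riemann-summation of heights of the path, for a total factor $n^{m/2}$; each descent between consecutive runs forces a local matching (via 132-avoidance) that contributes an extra factor $n\qq$, yielding the full scaling $n^{(m+d)/2}=n^{\gl(\gs)/2}$. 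In the limit, $n^{-\gl(\gs)/2}\ns(\pinzzz)$ is a Riemann approximation to an $m$-fold integral $\gL_\gs=\int_{\oi^m}\Phi_\gs\xpar{t_1,\dots,t_m;2\bex}\,\dd t_1\dotsm\dd t_m$ for an explicit polynomial functional $\Phi_\gs$ in the excursion heights at $t_1,\dots,t_m$ with matching conditions at the descent boundaries. The continuous mapping theorem then yields \eqref{tmaind}, jointly in $\gs$ since all limits are functionals of the same $\bex$. Strict positivity of $\gL_\gs$ follows because $\bex>0$ almost surely on $(0,1)$. Moment convergence comes from the known uniform sub-Gaussian tail bounds on $\sup_t X_n(t)$, which make powers of $n^{-\gl(\gs)/2}\ns(\pinzzz)$ uniformly integrable.

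The main obstacle is the combinatorial identification in the previous step: carefully translating the constraints on an occurrence of $\gs$ into a Dyck-path sum that is, up to negligible boundary error, a continuous functional of $X_n$, and verifying that each descent in $\gs$ produces exactly one extra factor $n\qq$. The monotone decreasing case $\gs=k\dotsm1$ bypasses this machinery entirely: we have $\gl(k\dotsm1)=2k$ while $\gl(\gs')<2k$ for every other $\gs'\in\fS_k$ by \eqref{gl}. Combining the sum identity \eqref{11}, $\sum_{\gs'\in\fS_k}\npp{\gs'}(\pinzzz)=\binom{n}{k}$, with the already-established bound $\npp{\gs'}(\pinzzz)=\Op\bigpar{n^{\gl(\gs')/2}}=\op\bigpar{n^k}$ for each such $\gs'$, forces $n^{-k}\npp{k\dotsm1}(\pinzzz)\pto 1/k!$, deterministic. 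For any other $\gs$, the Brownian integral $\gL_\gs$ is genuinely random, as can be verified by a direct computation of $\Var\gL_\gs>0$ from the functional $\Phi_\gs$.
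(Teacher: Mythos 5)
You have correctly reproduced the strategy that the paper attributes to \cite{SJ290} (the theorem is only quoted here, with no proof given): a bijection from $\fSnzzz$ to Dyck paths/binary trees, the invariance principle to the Brownian excursion, the continuous mapping theorem, and uniform integrability for moments. Your deduction of the monotone case is a genuinely nice shortcut: combining \eqref{11} with $n_{\gs'}(\pinzzz)=\Op\bigpar{n^{\gl(\gs')/2}}=\op(n^k)$ for every $\gs'\neq k\dotsm1$ of length $k$ (valid since $\gl(\gs')<2k$ exactly for those $\gs'$, and $n_{\gs'}(\pinzzz)=0$ for non-$132$-avoiding $\gs'$) does force $n^{-k}n_{k\dotsm1}(\pinzzz)\pto 1/k!$, granted the rest of the theorem.

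The genuine gap is the step you yourself label the ``main obstacle'': the combinatorial identification is not a technicality but the entire content of the theorem. The assertion that each descent of $\gs$ contributes exactly one extra factor $n\qq$ is precisely what must be proved, and nothing in the proposal proves it; note that for the superficially similar class $\fSxwww$ the relevant statistic is the number of \emph{blocks}, not descents (\refT{T1}), so the exponent cannot be guessed from soft considerations. Worse, the limit you propose is dimensionally inconsistent: an $m$-fold Riemann sum over $\oi^m$ of a functional of excursion heights scales as $n^{m}$ times the scale of the integrand, so to reach $n^{\gl(\gs)/2}=n^{(m+D(\gs))/2}$ the integrand would have to scale as $n^{(D(\gs)-m)/2}$, a \emph{negative} power of $n$ whenever $D(\gs)<m$. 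The one case the paper records explicitly confirms this: $\gL_{12}=\sqrt2\intoi\bex(x)\dd x$ is a one-dimensional integral although $m=2$. The correct functional (see \cite{SJ290}) integrates over $D(\gs)$ ordered variables with an integrand of degree $m-D(\gs)$ in the excursion (and in general involves running minima of $\bex$ between the integration points, which is why the paper calls the description ``rather complicated''); the count $D(\gs)+\bigpar{m-D(\gs)}/2=\gl(\gs)/2$ is what actually produces the exponent. Two smaller defects: the scaling limit of a uniform Dyck path of length $2n$ under division by $n\qq$ is $\sqrt2\,\bex$, not $2\bex$; and the claim that $\gL_\gs$ is non-deterministic for all $\gs\neq k\dotsm1$ cannot be ``verified by a direct computation of $\Var\gL_\gs$'' from a functional $\Phi_\gs$ that has not been exhibited.
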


The limit variables $\gL_\gs$ in \refT{T132} can be expressed as functionals
of a
Brownian excursion $\bex(x)$, see \cite{SJ290};
the description is, in general, rather complicated, but some cases are simple.

\begin{example}
In the special case $\gs=12$, $\gL_{12}=\sqrt2\intoi\bex(x)\dd x$,
see \cite[Example 7.6]{SJ290}; 
this is
(apart from the factor $\sqrt2$) 
the well-known \emph{Brownian excursion area},
see \eg{} \cite{SJ201} and the references there. 

For the number $n_{21}$ of inversions, we thus have 
\begin{equation}\label{qi}
  \frac{\binom n2 - n_{21}(\pinzzz)}{n^{3/2}}
=
  \frac{n_{12}(\pinzzz)}{n^{3/2}}
\dto \gL_{12}
=\sqrt2\intoi\bex(x)\dd x.
\end{equation}
By \refSS{SSsymm}, 
the \lhs{} can also be seen as the number of inversions
$\ninv(\pinx{231})$ or $\ninv(\pinx{312})$, 
normalized by $n^{3/2}$, 
where we instead avoid 231 or 312.
\end{example}

\begin{theorem}[\cite{SJ324}]\label{T1}
  Let $\gs\in\fSxwww$.  
Let $m:=|\gs|$, and
suppose that $\gs$ has $\ell$ blocks of lengths $m_1,\dots,m_\ell$.
Then, as \ntoo,
\begin{equation}\label{t1a}
  \ns(\pinx{\www})/n^{(m+\ell)/2}\dto W_\gs
\end{equation}
for a positive random variable $W_\gs$ that can be represented as
\begin{equation}\label{t1w}
  W_\gs= w_\gs\int_{0<t_1<\dots<t_\ell<1} 
\bex(t_1)^{m_1-1}\dotsm \bex(t_\ell)^{m_\ell-1}\dd t_1\dotsm \dd t_\ell,
\end{equation}
where
$w_\gs$ is positive constant. 

Moreover, the convergence \eqref{t1a} holds jointly for any set of
$\gs\in\fSxwww$, and with convergence of all moments.
\end{theorem}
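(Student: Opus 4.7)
The plan is to exploit the classical bijection between $\fS_n(\www)$ and Dyck paths of length $2n$ (equivalently, between $\fS_n(\www)$ and rooted plane trees with $n$ edges), which turns the study of a uniform $321$-avoiding permutation into the study of a uniform Dyck path. After rescaling, the uniform Dyck path converges to the normalized Brownian excursion $\bex$, so the key is to express $\ns(\pinwww)$ as, asymptotically, a continuous functional of the rescaled Dyck path, after which \eqref{t1a}--\eqref{t1w} follow by the continuous mapping theorem. I would first set up the bijection and identify a local statistic $h_\pi(i)$ (for instance, the number of inversions ending at position $i$, or a count of elements available to form a $\perm{21}$-like structure) whose value at $i\approx tn$ matches, up to a factor $\sqrt n$, the height $\bex(t)$ of the limiting excursion.

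For $\gs$ \emph{indecomposable} of length $m$, I would show that the number of occurrences of $\gs$ whose first element sits at position $i$ is asymptotically $c_\gs\, h_\pi(i)^{m-1}$ for some combinatorial constant $c_\gs>0$ depending only on $\gs$; the exponent $m-1$ is forced by the fact that the remaining $m-1$ entries of an occurrence must be selected from the pool of size roughly $h_\pi(i)$ indicated by the Dyck path height. Summing over $i$ and rescaling then gives
\begin{equation*}
  \ns(\pinwww)/n^{(m+1)/2} \dto w_\gs \intoi \bex(t)^{m-1}\dd t,
\end{equation*}
which is the $\ell=1$ case of \eqref{t1w}.

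For general $\gs$ with block decomposition $\gs = \gs^{(1)} * \dots * \gs^{(\ell)}$ of lengths $m_1,\dots,m_\ell$, the structure of $321$-avoiding permutations forces any occurrence of $\gs$ to split into $\ell$ consecutive position--value zones, one per block, with later zones lying strictly above earlier ones in both coordinates; thus the count factorises across blocks, with an outer sum over the $\ell$-tuple of break points $i_1<\dots<i_\ell$. Applying the single-block asymptotics within each zone gives a Riemann sum converging, after rescaling by $n^{(m+\ell)/2}$, to
\begin{equation*}
  w_\gs \int_{0<t_1<\dots<t_\ell<1} \bex(t_1)^{m_1-1}\dotsm\bex(t_\ell)^{m_\ell-1}\dd t_1\dotsm\dd t_\ell,
\end{equation*}
with $w_\gs$ a product of the per-block constants. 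Joint convergence for several $\gs$ is automatic since all the functionals are built from the same limiting excursion, and positivity of $W_\gs$ is immediate from $\bex>0$ on $(0,1)$.

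Moment convergence I would handle separately by uniform integrability: one bounds $\E[\ns(\pinwww)^p]$ for each $p$ by counting $p$-tuples of occurrences and controlling them via tail estimates for the maximum of the Dyck path (which converge to the finite-moment $\max\bex$), possibly combined with a direct count of $\fS_n(\www)$ via the Catalan numbers. The main obstacle will be the second step: pinning down the right local statistic $h_\pi(i)$ on the permutation side that simultaneously \emph{matches} the Dyck path height in the scaling limit and \emph{controls} the number of occurrences of each indecomposable block up to an error that is negligible under the $n^{(m+\ell)/2}$ normalisation. Once this matching is in place, the multi-block case, joint convergence, and moment convergence are routine applications of the invariance principle for Dyck paths together with the uniform integrability bounds.
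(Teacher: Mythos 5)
You should first note that the paper you were given contains no proof of this theorem: it is quoted verbatim from \cite{SJ324} (``we quote the main results of \cite{SJ290} and \cite{SJ324}, referring to these papers for further details and proofs''), so there is no in-paper argument to compare against. Your strategy --- the bijection between $\fS_n(\www)$ and Dyck paths, the invariance principle taking the rescaled path to the Brownian excursion $\bex$, a local statistic $h_\pi(i)$ of order $\sqrt n\,\bex(i/n)$ controlling the occurrences anchored at position $i$, factorization over the blocks of $\gs$, and uniform integrability via moments of the path maximum --- is exactly the route taken in \cite{SJ324}, and the $\ell=1$ and general-$\ell$ reductions you describe are the right skeleton.

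That said, what you have written is a plan rather than a proof, and the step you explicitly defer (``pinning down the right local statistic'') is where essentially all of the work lies. Three points are asserted without argument. (i) For an \emph{indecomposable} $321$-avoiding $\gs$ of length $m$ you must prove, not posit, that the number of occurrences with first element at position $i$ equals $c_\gs\,h_\pi(i)^{m-1}$ plus an error that is negligible after dividing by $n^{(m-1)/2}$ and summing over $i$; this rests on the specific structure of indecomposable $321$-avoiding patterns as interleavings of the two increasing subsequences of $\pi$, and both the identification of $c_\gs$ and the error control are nontrivial. (ii) Your ``zones'' argument for the block decomposition ignores that consecutive blocks of an occurrence must be separated in \emph{value} as well as in position; one must show that for $t_j<t_{j+1}$ bounded away from each other this value constraint is asymptotically automatic (because $|\pi_i-i|$ is of order $\sqrt n$ uniformly in $i$ for the random $\pinwww$), and separately that tuples with $t_j\approx t_{j+1}$ contribute negligibly --- otherwise the limit would not be the clean iterated integral in \eqref{t1w}. (iii) Positivity of $W_\gs$ requires $w_\gs>0$, i.e.\ $c_\gs>0$ for every block, which again needs the structural analysis in (i); it does not follow merely from $\bex>0$ on $(0,1)$. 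None of these is an obstacle in principle --- they are all carried out in \cite{SJ324} --- but until (i) and (ii) are supplied your argument does not yet establish \eqref{t1a}--\eqref{t1w}.
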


\begin{example}\label{E21}
  Let $\gs=\perm{21}$. Then $w_{21}=2\qqw$, see \cite{SJ324},
and thus  \eqref{t1a}--\eqref{t1w}, with $\ell=1$ and $m_1=m=2$, 
yield 
for the number of inversions,
\begin{equation}\label{qj}
 \frac{ n_{21}(\pinwww)}{ n^{3/2}}\dto 2\qqw \intoi \bex(x)\dd x.
\end{equation}
Note that the limit in \eqref{qj} differs from the one in \eqref{qi} by a
factor 2.
\end{example}

\section{Avoiding \set{\perm{132}, \perm{312}}}\label{SD}

In this section we avoid $T=\set{\permD}$. 
Equivalent sets are 
\set{132,231}, 
\set{213, 231},  
\set{213, 312}.  

It was shown by \citet{SS} that $|\fSn(\permD)|=2^{n-1}$, together with the
following characterization (in an equivalent formulation).

\begin{prop}[{\cite[Proposition 12]{SS}}]\label{PD}
A permutation $\pi$ belongs to the class
 $\fSx(\permD)$ if and only if every 
entry $\pi_i$ is either a maximum or a minimum.
\qed
\end{prop}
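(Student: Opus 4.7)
The plan is to prove the two directions of the biconditional separately, with the key observation that in both forbidden patterns $\perm{132}$ and $\perm{312}$, the \emph{last} (third) entry has middle value: for $\perm{132}$ we have $\pi_i<\pi_k<\pi_j$, and for $\perm{312}$ we have $\pi_j<\pi_k<\pi_i$, where $i<j<k$ are the positions of the occurrence. In particular, $\pi_k$ lies strictly between $\pi_i$ and $\pi_j$ in value.

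For the ``if'' direction, I would suppose every entry of $\pi$ is either a maximum or a minimum (in the LR sense defined in the paper), and argue by contradiction. An occurrence of $\perm{132}$ or $\perm{312}$ at positions $i<j<k$ forces $\pi_k$ to be strictly smaller than one of $\pi_i,\pi_j$ and strictly larger than the other. Since $i,j<k$, this means $\pi_k$ is neither greater than all earlier entries (so not a maximum) nor smaller than all earlier entries (so not a minimum), contradicting the hypothesis.

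For the ``only if'' direction, I would take the contrapositive: suppose some entry $\pi_k$ is neither a maximum nor a minimum. Then there exist indices $i,j<k$ with $\pi_i>\pi_k$ and $\pi_j<\pi_k$; note necessarily $i\neq j$. Split into two cases according to the order of $i$ and $j$. If $i<j<k$, the triple $(\pi_i,\pi_j,\pi_k)$ satisfies $\pi_j<\pi_k<\pi_i$ and so forms an occurrence of $\perm{312}$; if $j<i<k$, the triple $(\pi_j,\pi_i,\pi_k)$ satisfies $\pi_j<\pi_k<\pi_i$ and so forms an occurrence of $\perm{132}$. Either way $\pi\notin\fSx(\permD)$.

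There is no real obstacle here: the proof is a short case analysis once one notices the pivotal role of the last coordinate in the patterns $\perm{132}$ and $\perm{312}$. The only thing to be careful about is matching the convention used in the paper (``maximum/minimum'' meaning LR maximum/minimum, considering only positions $j<i$), which is exactly what makes the argument go through for the two-sided witness pair $(i,j)$.
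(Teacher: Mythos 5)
Your argument is correct. Note that the paper gives no proof of this proposition at all: it is quoted from Simion and Schmidt \cite[Proposition 12]{SS} with an immediate \textup{qed}, so your write-up supplies a proof where the paper only cites one. The argument itself is sound and complete. The pivotal observation—that in both $\perm{132}$ and $\perm{312}$ the \emph{last} entry of the occurrence carries the \emph{middle} value—is exactly the right one: it makes the forward direction immediate (an entry that lies strictly between two earlier entries is neither an LR maximum nor an LR minimum), and the contrapositive of the converse reduces to the two-case check on the relative order of the witnesses, which you carry out correctly (positions $i<j<k$ with $\pi_j<\pi_k<\pi_i$ give an occurrence of $\perm{312}$; positions $j<i<k$ with the same value relations give $\perm{132}$). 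The conventions you flag—the LR sense of maximum/minimum, and the fact that $\pi_1$ is vacuously both—are handled properly, so there is no gap.
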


We encode a permutation $\pi\in\fSn(\permD)$ by a sequence
$\xi_2,\dots,\xi_n\in\set{\pm1}^{n-1}$, where $\xi_j=1$ if $\pi_j$ is a
maximum in $\pi$, and $\xi_j=-1$ if $\pi_j$ is a minimum.
This is by \refP{PD} a bijection, and hence the code for a
uniformly random $\pinD$ has $\xi_2,\dots,\xi_n$ \iid{} with
the symmetric Bernoulli distribution
$\P(\xi_j=1)=\P(\xi_j=-1)=\frac12$.
We let $\xi_1$ have the same distribution and be independent of
$\xi_2,\dots,\xi_n$. 

Let  $\gs\in\fSm(\permD)$ have the code $\eta_2,\dots,\eta_m$.
Then $\pi_{i_1}\dotsm\pi_{i_m}$ is an occurrence of $\gs$ in $\pi$
if and only if $\xi_{i_j}=\eta_j$ for $2\le j\le m$.
Consequently, \cf{} \refSS{SSU},
$\ns(\pinD)$ is a \Ustat{}
\begin{equation}\label{nsD}
  \ns(\pinD)=\sum_{i_1<\dots<i_m}f\bigpar{\xi_{i_1},\dots,\xi_{i_m}},
\end{equation}
where
\begin{equation}\label{fD}
  f\bigpar{\xi_1,\dots,\xi_m}:=\prod_{j=2}^m\ett{\xi_j=\eta_j}.
\end{equation}
Note that $f$ does not depend on the first argument.
It follows that, with the notation \eqref{Umu}--\eqref{Ugss}, 
\begin{align}
 \mu &= \E f\bigpar{\xi_1,\dots,\xi_m}=2^{-(m-1)},
\label{muD}
\\ 
  f_i(\xi)&=
  \begin{cases}
0, & i=1,
\\
2^{-(m-2)}\ett{\xi=\eta_i},
& 2\le i\le m,
  \end{cases}
\label{fiD}
\\
  \ggs_{ij}&=\Cov \bigpar{f_i(\xi),f_j(\xi)} = 2^{2-2m}\eta_i\eta_j, 
\qquad i,j\ge2,
\label{gsijD}
\\
  \ggss&=
2^{2-2m}
\sum_{i,j=2}^m
\frac{(i+j-2)!\,(2m-i-j)!}{(i-1)!\,(j-1)!\,(m-i)!\,(m-j)!\,(2m-1)!}\eta_i\eta_j
.
\label{gssD}
\end{align}
\refP{PU} yields:
\begin{theorem}\label{TD}
  For any $m\ge1$ and $\gs\in\fSm(\permD)$, as \ntoo,
  \begin{equation}\label{td}
    \frac{\ns(\pinD)-2^{1-m} n^{m}/m!}{n^{m-1/2}}
\dto N\bigpar{0,\ggss},
  \end{equation}
with $\ggss>0$ given by \eqref{gssD}.

Moreover, \eqref{td} holds with convergence of all moments.
\end{theorem}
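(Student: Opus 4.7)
The plan is to apply \refP{PU} directly to the \Ustat{} representation set up in \eqref{nsD}--\eqref{fD}. The preparatory work preceding the theorem has done nearly all the job: \refP{PD} gives a bijection between $\fSn(\permD)$ and $\{\pm1\}^{n-1}$, under which the uniform measure on $\fSn(\permD)$ corresponds to i.i.d.\ fair Bernoulli signs $\xi_2,\dots,\xi_n$; an occurrence of $\gs$ (with code $\eta_2,\dots,\eta_m$) at positions $i_1<\dots<i_m$ is exactly the event $\{\xi_{i_j}=\eta_j,\ 2\le j\le m\}$. This gives $\ns(\pinD)$ as a \Ustat{} of order $m$ with bounded indicator kernel $f$, and the quantities $\mu$, $f_i$, $\ggs_{ij}$, $\ggss$ computed in \eqref{muD}--\eqref{gssD} are all that \refP{PU} requires as input.

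Because $f$ is an indicator, $f(\xi_1,\dots,\xi_m)\in L^p$ for every $p<\infty$, so \refP{PU} applies in its moment-convergence version and yields both the normal limit \eqref{td} and convergence of all moments. The centering $\binom{n}{m}\mu = 2^{1-m}n^m/m! + O(n^{m-1})$ absorbs into $n^{m-1/2}\cdot o(1)$, giving the stated asymptotic mean, and the limiting variance is exactly $\ggss$ from \eqref{gssD}.

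The only nontrivial step is strict positivity of $\ggss$. By the last clause of \refP{PU}, it suffices to exhibit some $i$ for which $f_i(\xi)\neq\mu$ \as. Taking any $i\ge2$ (which is possible as soon as $m\ge2$), \eqref{fiD} gives $f_i(\xi)=2^{-(m-2)}\ett{\xi=\eta_i}$, which takes the two distinct values $0$ and $2\mu$ each with probability $\frac12$, so certainly is not \as\ equal to $\mu=2^{-(m-1)}$. The degenerate case $m=1$ (where necessarily $\gs=\perm1$ and $\ns\equiv n$) is trivial and can be excluded explicitly. I do not foresee any substantive obstacle: once the i.i.d.\ encoding from \refP{PD} is in place, the theorem is a direct corollary of \refP{PU}, and the main task is simply to carry out the moment bookkeeping, which \eqref{muD}--\eqref{gssD} already do.
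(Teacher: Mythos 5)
Your proposal is correct and takes exactly the paper's route: Theorem~\ref{TD} is stated there as an immediate application of Proposition~\ref{PU} to the \Ustat{} representation \eqref{nsD}--\eqref{gssD}, with moment convergence from the boundedness of the indicator kernel and nondegeneracy from the last clause of Proposition~\ref{PU}. Your explicit handling of the degenerate case $m=1$ (where the sum \eqref{gssD} is empty and the statement is vacuous) is a minor point the paper glosses over, but it does not change the argument.
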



\begin{example}
  For the number of inversions, we have $\gs=21$ and $m=2$, $\eta_2=-1$.
Thus, \eqref{fiD} yields $f_1(\xi)=0$ and $f_2(\xi)=\ett{\xi=-1}$.
We find, from \eqref{muD}--\eqref{gssD}, 
$\mu=\frac12$, $\ggs_{22}=\frac{1}4$ and 
$\ggss=\frac{1}{12}$, and thus \refT{TD} yields
  \begin{equation}
    \frac{\ninv(\pinD)- n^{2}/4}{n^{3/2}}
\dto N\bigpar{0,\tfrac1{12}},
  \end{equation}
\end{example}

\begin{remark}\label{RD}
It is easily
  seen from \eqref{nsD}--\eqref{fD} that the expected number of occurrences
$\E\ns(\pinD) =2^{1-m}\binom nm$, 
for every $\gs\in\fSm(\permD)$; hence the expectation depends only on the
length $m=|\gs|$.

The variance depends not only on $|\gs|$, not even asymptotically, by
\eqref{gssD}.
\end{remark}

\section{Avoiding \set{\perm{231}, \perm{312}}}\label{SB}

In this section we consider $T=\set{\permB}$. 
The only equivalent set is
\set{132, 213}.    

It was shown by \citet{SS} that $|\fSn(\permB)|=2^{n-1}$, together with the
following characterization (in an equivalent form).

\begin{prop}[{\cite[Proposition 12]{SS}}]\label{PB}
A permutation $\pi$ belongs to the class 
$\fSx(\permB)$ if and only if every block in
$\pi$ is decreasing, i.e., of the type $\ell(\ell-1)\dotsm 21$ for some $\ell$. 
\qed
\end{prop}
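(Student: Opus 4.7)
The plan is to prove both directions by reducing to the case of an indecomposable permutation, exploiting the fact that the two forbidden patterns $\perm{231}$ and $\perm{312}$ are themselves indecomposable.

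First I observe that for any composition $\pi=\gs*\tau$ the values of $\gs$ lie strictly below those of $\tau$, so any occurrence of an indecomposable pattern $\rho$ in $\pi$ must lie entirely inside one of the two factors; an occurrence spanning both would split its entries into a low initial segment and a high terminal segment, forcing $\rho$ to be decomposable. Iterating, every occurrence of $\perm{231}$ or $\perm{312}$ in $\pi=\pi^{(1)}*\dotsm*\pi^{(\ell)}$ sits inside a single block $\pi^{(s)}$. Consequently $\pi$ avoids $\{\perm{231},\perm{312}\}$ if and only if every block does, and the proposition reduces to showing that any indecomposable element of $\fSx(\perm{231},\perm{312})$ is a decreasing permutation (the reverse implication is trivial: a decreasing permutation is indecomposable, and all its sub-patterns are decreasing).

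For this non-trivial direction I induct on $n=|\pi|$; the cases $n\le 2$ are immediate. Let $\pi$ be indecomposable of length $n\ge 3$ and let $k$ be the position with $\pi_k=n$. Suppose, for contradiction, that $k\ge 2$. For any $i<k<j$ the triple $(\pi_i,\pi_k,\pi_j)=(\pi_i,n,\pi_j)$ forms a $\perm{231}$ pattern whenever $\pi_j<\pi_i$; avoidance therefore gives $\pi_j>\pi_i$ for all such $i,j$, so
\[
\max\{\pi_1,\dotsc,\pi_{k-1}\}<\min\{\pi_{k+1},\dotsc,\pi_n\}.
\]
Combined with $\pi_k=n$, this forces $\{\pi_1,\dotsc,\pi_{k-1}\}=[k-1]$, contradicting indecomposability. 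Hence $\pi_1=n$, and $\pi':=\pi_2\dotsm\pi_n$ is a permutation of $[n-1]$ in $\fS_{n-1}(\perm{231},\perm{312})$. By the induction hypothesis every block of $\pi'$ is decreasing; if $\pi'$ had at least two blocks, the first would be $(j,j-1,\dotsc,1)$ for some $j<n-1$, and $\pi_{j+2}>j=\pi_2$ would belong to the next block, making positions $(1,2,j+2)$ carry the values $(n,j,\pi_{j+2})$ — an explicit $\perm{312}$ occurrence. Thus $\pi'$ consists of one block, is itself decreasing by induction, and $\pi=n(n-1)\dotsm 1$.

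The main obstacle is the step forcing $\pi_1=n$: one must combine the order-separation consequence of $\perm{231}$-avoidance with indecomposability to rule out an interior maximum. The remainder is a clean induction, aided by the fact that the failure of either desired conclusion (a non-initial maximum, or a multi-block tail) immediately produces an explicit forbidden pattern.
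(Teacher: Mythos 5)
Your proof is correct. Note that the paper itself offers no argument for this statement: it is quoted verbatim from Simion and Schmidt \cite[Proposition 12]{SS} with an immediate \verb|\qed|, so there is nothing internal to compare against. Your write-up supplies a complete, self-contained proof, and each step checks out: the observation that an indecomposable pattern (here $\perm{231}$ and $\perm{312}$) can only occur inside a single block of the block decomposition correctly reduces the statement to showing that an indecomposable $\{\perm{231},\perm{312}\}$-avoider is decreasing; the $\perm{231}$-avoidance argument correctly forces the maximum to sit in position $1$ (with the case $\pi_k=n$ at $k=n$ handled implicitly, since the separation condition is then vacuous and $\{\pi_1,\dots,\pi_{n-1}\}=[n-1]$ still contradicts indecomposability); and the $(n,j,\pi_{j+2})$ triple is indeed a $\perm{312}$ occurrence, so the tail $\pi'$ is a single block and the induction closes. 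One could shorten the argument slightly by skipping the reduction to indecomposables and arguing directly that in a $\perm{231}$-avoider every left-to-right maximum is followed immediately by the values below it in decreasing order, but your route has the advantage of isolating the block-localization principle, which is exactly the structural fact the paper reuses in \refS{SB} to set up the renewal-theoretic $U$-statistic.
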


Hence there exists exactly one block of each length $\ell\ge1$, 
and a permutation $\pi\in\fSx(\permB)$ is uniquely determined by the block
lengths.
In this section, let $\pi_{\ell_1,\dots,\ell_b}$ denote the  permutation in
$\fSx(\permB)$ with block lengths $\ell_1,\dots,\ell_b$, i.e.,
\begin{equation}\label{piB}
  \pi_{\ell_1,\dots,\ell_b}:=\iotax_{\ell_1}*\dotsm *\iotax_{\ell_b}.
\end{equation}

If $\gs,\pi\in\fSx(\permB)$, then in an occurrence of $\gs$ in $\pi$, each
block in $\gs$ has to be mapped into a block in $\pi$, and distinct blocks
have to be mapped into distinct blocks.
Conversely, any such increasing map $[m]\to[n]$ defines an occurence of
$\gs$.
It follows that if $\gs=\pi_{\ell_1,\dots,\ell_b}$, then
\begin{equation}\label{nsB}
  \ns\bigpar{\pi_{L_1,\dots,L_B}} 
= \sum_{1\le i_1<\dots<i_b\le B} \prod_{j=1}^b \binom{L_{i_j}}{\ell_i}.
\end{equation}
This is similar to a \Ustat{} \eqref{U}, but note that if we write $\pinB$ as
$\pi_{L_1,\dots,L_B}$, then the block lengths $L_1,\dots,L_B$ are not
independent (since their sum is fixed $=n$), and the number of blocks $B$ is
random. However, we can analyze this variable using the renewal theory in
\refSS{SSU} as follows.

First, mark each endpoint of the blocks in $\pi\in\fSn(\permB)$ by 1, and
mark all 
other indices in $[n]$ by $0$. Thus $\pi$ defines a string
$\xi_1,\dots,\xi_n\in\setoi^n$,
where necessarily $\xi_n=1$ but $\xi_1,\dots,\xi_{n-1}$ are arbitrary.
This yields a bijection between $\fSn(\permB)$ and the $2^{n-1}$ such
strings; hence, we obtain a uniformly random $\pinB$ by letting
$\xi_1,\dots,\xi_{n-1}$ be \iid{} $\Be(\frac12)$, \ie, with
$\P(\xi_i=0)=\P(\xi_i=1)=\frac12$. 

We change notation a little, to avoid problems at the endpoint, and define
$\xix_1,\xix_2,\dots$ as an infinite \iid{} sequence with
$\xix_i\sim\Be(\frac12)$.
Regard each $i$ with $\xix_i=1$ as the end of a block, and 
let $X_1,X_2,\dots$, be the successive  lengths of these (infinitely
many) blocks. Then $X_i$ are \iid{} with 
\begin{equation}\label{Ge}
X_i\sim\Ge(\tfrac12).  
\end{equation}
Given $n$, we then may let $\xi_i:=\xix_i$ for $1\le i<n$, and $\xi_n:=1$;
this determines $\xi_1,\dots,\xi_n$ as above, and thus a uniformly random
$\pinB$.
With this construction, the number of blocks in $\pinB$ is, 
recalling \eqref{NN-}--\eqref{NN+}, 
$B=\NN+(n)$, and the block lengths are
\begin{equation}\label{GeL}
  L_i=
  \begin{cases}
    X_i,& i<\NN+(n)
\\
n-\sum_{i<\NN+(n)}X_i \le X_{\NN+(n)}, & i=\NN+(n).
  \end{cases}
\end{equation}
Consequently,
if $\gs=\pi_{\ell_1,\dots,\ell_b}$ and we define
\begin{equation}\label{fB}
f(x_1,\dots,x_b):=\prod_{j-1}^b\binom{x_i}{\ell_i},    
\end{equation}
then \eqref{nsB} and \eqref{U} show that
\begin{equation}\label{UB}
U_{\NN-(n)}\le  \ns(\pinB) \le U_{\NN+(n)}.
\end{equation}
Consequently, the asymptotic result in \eqref{cvtau}, which holds for both
$U_{\NN-(n)}$  and $U_{\NN+(n)}$, holds also for $\ns(\pinB)$. 

\begin{remark}\label{Ralt}
  Alternatively, we can obtain $(\xi_i)$ from $(\xi'_i)$ by conditioning on
$\xi'_n=1$, and note that this holds when $S_{\NN+(n)}=n$ (see \refR{RNN}), 
and then 
$\ns(\pinB) = U_{\NN+(n)}$. The result then follows from \refP{PUN2}.
\end{remark}

To calculate the parameters, note that,
by \eqref{Ge}, $X$ has the \pgf{}
\begin{equation}\label{pgfB}
 g(z):= \E z^X = \sumk 2^{-k}z^k=\frac{z}{2-z}=\frac{2}{2-z}-1
\end{equation}
and it follows that for any integers $k,l\ge0$ with $(k,l)\neq(0,0)$,
\begin{align}\label{essB}
\E\lrpar{ \binom{X}{k}\binom{X}{\ell}}
&=[z^kw^\ell]\E \bigpar{(1+z)^X(1+w)^X}
\\&
=[z^kw^\ell]g\bigpar{(1+z)(1+w)}
\\&
=[z^kw^\ell]\frac{2}{2-(1+z)(1+w)}
\\&
=[z^kw^\ell]\frac{2}{1-z-w-zw}
\\&
=2D(k,\ell)
=2\sum_{i=0}^{k\land\ell}\frac{(k+\ell-i)!}{(k-i)!\,(\ell-i)!\,i!}
\end{align}
where $D(k,\ell)$ denotes the Delannoy numbers.
($D(k,\ell)$ is, e.g., the number of lattice paths from $(0,0)$ to $(k,\ell)$
with steps $(1,0)$, $(0,1)$ or $(1,1)$; see \cite[Example 6.3.8]{StanleyII}
and \cite[A008288 and A001850]{OEIS} and the references there.)
Simple
calculations
then yield
\begin{align}\label{Balpha}
\nu&=\E X = 2,  
\\
\Var(X)&=2.  \label{BvarX}
\\
\mu&=\E \prod_{j=1}^b\binom{X}{\ell_i} 
= \prod_{j=1}^b\E\binom{X}{\ell_i} 
= 2^b,
\\
f_i(X)&=2^{b-1}\binom{X}{\ell_i}, 
\\
\ggs_{ij}&=\Cov\bigpar{f_i(X),f_j(X)}
=2^{2b-1}D(\ell_i,\ell_j)-2^{2b},
\\
\Cov\bigpar{f_i(X),X}&=2^bD(\ell_i,1)-2^{b+1}=(2\ell_i-1)2^b
.  \label{Bomega}
\end{align}
Consequently, we obtain by \refPs{PUN} and \ref{PUNmom}
asymptotic normality in the following form.

\begin{theorem}\label{TB}
Let $\gs\in\fSm(\permB)$ have block lengths $\ell_1,\dots,\ell_b$. Then,
as \ntoo,
\begin{equation}\label{tb}
  \frac{\ns(\pinB)-n^b/b!}{n^{b-1/2}}
\dto N\bigpar{0,\gamxx},
\end{equation}
where $\gamxx$ can be calculated by \eqref{cvtau2} and
\eqref{Balpha}--\eqref{Bomega}.

Moreover, \eqref{tb} holds with convergence of all moments.
\qed
\end{theorem}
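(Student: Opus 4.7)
The strategy is to recognize $\ns(\pinB)$ as a renewal-stopped U-statistic and then invoke \refP{PUN} (or, as indicated in \refR{Ralt}, \refP{PUN2}) together with \refP{PUNmom} for moment convergence. All the essential structural work has already been carried out in the preparation above: the key identity \eqref{nsB} expresses $\ns(\pinB)$ exactly as a sum over increasing tuples of blocks, with summand $\prod_j\binom{L_{i_j}}{\ell_j}$, and the \iid{} geometric representation \eqref{Ge}--\eqref{GeL} makes the connection with the U-statistic \eqref{U} via the function $f$ in \eqref{fB} explicit.

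First I would make the sandwich inequality \eqref{UB} rigorous: the block lengths $L_1,\dots,L_{\NN+(n)}$ agree with $X_1,\dots,X_{\NN+(n)-1}$ except possibly for the last, which is truncated to $n-S_{\NN-(n)}\le X_{\NN+(n)}$, so every product of binomial coefficients $\prod_j\binom{L_{i_j}}{\ell_j}$ is sandwiched term-by-term between the corresponding products for $X_i$, yielding $U_{\NN-(n)}\le\ns(\pinB)\le U_{\NN+(n)}$. Since \refP{PUN} gives the \emph{same} limit \eqref{cvtau} for both $\NN-(n)$ and $\NN+(n)$, the squeeze delivers the same limit for $\ns(\pinB)$. (Alternatively, following \refR{Ralt}, I would condition on $\xix_n=1$, which is equivalent to $S_{\NN+(n)}=n$, making $\ns(\pinB)=U_{\NN+(n)}$ exactly, and then apply \refP{PUN2} for the integer-valued setting; either route works.)

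Next I would assemble the constants. The \pgf{} computation \eqref{pgfB} yields $\E X=2$ and $\Var X=2$, independence of the $X_i$ together with $\E\binom{X}{\ell}=2$ from \eqref{essB} (the case $k=0$) gives $\mu=2^b$ and $f_i(X)=2^{b-1}\binom{X}{\ell_i}$, while the bivariate identity $\E\binom{X}{k}\binom{X}{\ell}=2D(k,\ell)$ from \eqref{essB} produces the covariances $\ggs_{ij}$ and $\Cov(f_i(X),X)$ in \eqref{gsijD}--\eqref{Bomega}. Plugging $\mu=2^b$, $\nu=2$, and $d=b$ into the centering $\nu^{-d}\mu (d!)^{-1}x^d$ of \eqref{cvtau} with $x=n$ gives exactly $n^b/b!$, matching \eqref{tb}, and the scaling $x^{d-1/2}$ gives $n^{b-1/2}$. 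The variance $\gamxx$ is then read off from \eqref{cvtau2}.

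The main (very minor) obstacle is verifying the moment hypotheses needed to invoke \refP{PUNmom}: I need $X\in L^p$ and $f(X_1,\dots,X_b)\in L^p$ for every $p<\infty$. The first is immediate since $X\sim\Ge(\tfrac12)$ has all moments, and the second follows because $\binom{X}{\ell_i}$ is a polynomial in $X$ of degree $\ell_i$ and the $X_i$ are independent. With these in hand, \refP{PUNmom} upgrades the distributional limit to convergence of all moments, completing the proof. Positivity of $\gamxx$ in this setting is automatic from \refP{PUN} since $f_i(X)=2^{b-1}\binom{X}{\ell_i}$ is not a deterministic multiple of $X$ whenever $\ell_i\ge 2$, and when all $\ell_i=1$ the contribution from the $\Var(X)$ term in \eqref{cvtau2} alone is strictly positive.
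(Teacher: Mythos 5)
Your argument is essentially identical to the paper's: the paper proves Theorem \ref{TB} by exactly the sandwich \eqref{UB} (or the conditioning of \refR{Ralt}), the parameter computations \eqref{Balpha}--\eqref{Bomega} from the \pgf{} \eqref{pgfB}, and an appeal to \refPs{PUN} and \ref{PUNmom}; your verification of the $L^p$ hypotheses is the only detail the paper leaves implicit. One correction to your final sentence, which is extraneous to the theorem as stated but is wrong: when all $\ell_i=1$ one has $f_i(X)=2^{b-1}X=\frac{\mu}{\nu}X$ a.s., which is precisely the degenerate case excluded in \refP{PUN}, so $\gamxx=0$ for $\gs=\iota_m$ (the $\Var(X)$ term in \eqref{cvtau2} is cancelled by the covariance terms), as the paper notes in the remark following the theorem.
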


\begin{example}\label{EB}
For the number of  inversions, we have $\gs=21$ and $b=1$, $\ell_1=2$.
A calculation yields $\gamxx=6$, and \refT{TB} yields
\begin{equation}
  \frac{\ninv(\pinB)-n}{n\qq}\dto N(0,6).
\end{equation}
\end{example}

\begin{remark}
\refT{TB} shows that the typical order of $\ns(\pinB)$ depends 
only on the number of blocks $b$ in $\gs$
(but not on the length $|\gs|$);
more precisely, the asymptotic mean depends only on $b$. 
(Cf.\  the different situation when avoiding $\set{\permD}$ in \refS{SD},
see \refR{RD}.)
Calculations (assisted by Maple) show, however, that the asymptotic
variance $\gamxx$ depends not only on $m$ and $b$; for example 
$\gs=2143=\iotax_2*\iotax_2$ has $\gamxx=6$ while
$\gs=3214=\iotax_3*\iotax_1$ has $\gamxx=52/3$.
\end{remark}

\begin{remark}
The asymptotic variance $\gamxx=0$ when $\gs=\iota_m=1\dotsm m$, in which
case $b=m$ and all blocks have length 1.
This can be seen directly, since all other patterns occur only
$\Op(n^{m-1})$ times (by \refT{TB}), 
and thus $\iota_m$ occurs $\binom nm - \Op(n^{m-1})$ times.
This argument also shows that the asymptotic variance of $n_{1\dotsm m}(\pinB)$ is
of the order $n^{2m-3}$.

It follows from \refP{PUN}
that $\gamxx>0$ for any other
$\gs\in\fSx(\permB)$.
\end{remark}

\section{Avoiding \set{\perm{231}, \perm{321}}}\label{SA}

In this section we consider $T=\set{\permA}$. 
Equivalent sets are 
\set{123, 132},  
\set{123,  213},   
\set{312,  321}.

It was shown by \citet{SS} that $|\fSn(\permA)|=2^{n-1}$, together with the
following characterization (in an equivalent form).

\begin{prop}[{\cite[Proposition 12]{SS}}]\label{PA}
A permutation $\pi$ belongs to the class
$\fSx(\permA)$ if and only if every block in
$\pi$ is of the type $\ell12\dotsm(\ell-1)$ for some $\ell$. 
\qed
\end{prop}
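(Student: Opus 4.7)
The plan is to reduce to the indecomposable case and then completely pin down the shape of a single block.

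First I would observe that both $\perm{231}$ and $\perm{321}$ are themselves indecomposable permutations. As a consequence, if $\pi=\pi^{(1)}*\dotsm*\pi^{(r)}$ is the block decomposition of $\pi$ (see \refSS{SSblocks}), then $\pi$ contains an indecomposable pattern $\tau$ if and only if some block $\pi^{(i)}$ contains $\tau$. Therefore $\pi\in\fSx(\permA)$ iff every block of $\pi$ lies in $\fSx(\permA)$, and it suffices to characterize the \emph{indecomposable} members of $\fSx(\permA)$ as being precisely the permutations of the form $\ell\,1\,2\dotsm(\ell-1)$ for some $\ell\ge1$.

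For sufficiency, let $\pi=\ell\,1\,2\dotsm(\ell-1)$. For $\ell\ge 2$ the image $\pi([m])$ for $1\le m<\ell$ contains $\pi_1=\ell>m$, so $\pi$ is indecomposable. The only descent of $\pi$ is at position $1$, so $\pi$ has no decreasing subsequence of length $3$ and hence avoids $\perm{321}$. For $\perm{231}$, any occurrence would need a position $b$ whose value strictly exceeds both a preceding and a following value; since $\pi_2,\dots,\pi_\ell$ is increasing, the middle position of such an occurrence could only be $b=1$, but $\pi_1=\ell$ is maximal and cannot be the $\perm{2}$ of a $\perm{231}$ pattern.

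For necessity, let $\pi\in\fS_n$ be indecomposable and $\permA$-avoiding, and set $k:=\pi_1$; indecomposability forces $k\ge 2$ (for $n\ge 2$). I will argue the following two structural claims, each by exhibiting a forbidden pattern anchored at position $1$.

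\emph{Claim (i):} the values in $\{1,\dots,k-1\}$ appear in the subsequence $\pi_2\pi_3\dotsm\pi_n$ in increasing order. Otherwise, two of them would form a descent, which together with $\pi_1=k$ gives a $\perm{321}$.

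\emph{Claim (ii):} if $\pi_j>k$ for some $j\ge 2$, then $\pi_{j'}>k$ for every $j'>j$. Otherwise, taking positions $1<j<j'$ with values $k,\pi_j,\pi_{j'}$ satisfying $\pi_{j'}<k<\pi_j$ yields a $\perm{231}$.

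Combining (i) and (ii): the positions of the values $1,\dots,k-1$ form an initial segment $\{2,3,\dots,k\}$ (and in that order), because as soon as a value $>k$ appears, no value $<k$ can follow. Hence $\pi([k])=[k]$, and by indecomposability this forces $k=n$. Then (i) gives $\pi_i=i-1$ for $2\le i\le n$, so $\pi=n\,1\,2\dotsm(n-1)$, completing the proof.

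The only potentially delicate step is Claim~(ii) together with deducing that the positions of the small values are consecutive starting from $2$; once those are in hand, indecomposability immediately closes the argument.
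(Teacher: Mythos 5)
Your argument is correct, but there is nothing in the paper to compare it against: Proposition~\ref{PA} is quoted from Simion and Schmidt \cite[Proposition 12]{SS} and stated without proof, so yours is a genuine standalone verification. The route you take is the natural one: since $\perm{231}$ and $\perm{321}$ are indecomposable, an occurrence of either cannot straddle two blocks of $\pi$ (an occurrence meeting $s\ge2$ blocks would exhibit the pattern as a $*$-composition of $s$ nonempty pieces), so the problem reduces to classifying the indecomposable members of $\fSx(\permA)$; your Claims (i) and (ii), anchoring a $\perm{321}$ respectively a $\perm{231}$ at the first entry $\pi_1=k$, correctly force the values $1,\dots,k-1$ to occupy positions $2,\dots,k$ in increasing order, whence $\pi([k])=[k]$ and indecomposability gives $k=n$. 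Two spots in the sufficiency half are garbled in the writing though not in substance: the middle position of any length-$3$ occurrence is necessarily $\ge2$, so the clean statement is that no position $b\ge2$ of $\ell\,1\,2\dotsm(\ell-1)$ exceeds any later value (the tail is increasing), hence no position can play the ``$3$'' of a $\perm{231}$ --- the detour through ``$b=1$'' and ``$\pi_1$ cannot be the $2$'' conflates the two roles; and the inference from ``only one descent'' to ``no decreasing subsequence of length $3$'' deserves the one-line justification that a permutation with one descent is a union of two increasing runs, each of which a decreasing subsequence meets at most once. These are presentational, not mathematical, gaps.
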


Thus, as in \refS{SB}, a permutation in $\fSx(\permA)$ is determined by its
block lengths, and these can be arbitrary.
In this section, let $\pi_{\ell_1,\dots,\ell_b}$ denote the  permutation in
$\fSx(\permA)$ with block lengths $\ell_1,\dots,\ell_b$.

Again, in an occurrence of $\gs$ in $\pi$, each block in $\gs$ has to be
mapped into a block in $\pi$.
However, this time, several consecutive blocks in $\gs$ may be mapped to the
same block in $\pi$, provided they have length 1.
Moreover, if a block of length $\ell\ge2$ in $\gs$ is mapped to a block in
$\pi$, then the first element has to be mapped to the first element.
Hence, we obtain instead of \eqref{nsB}, if $\gs=\pi_{\ell_1,\dots,\ell_b}$,
\begin{equation}\label{nsA}
  \ns\bigpar{\pi_{L_1,\dots,L_B}} 
= \sum_{1\le i_1<\dots<i_b\le B} \prod_{j=1}^b h_{\ell_i}(L_{i_j})+R,
\end{equation}
where
\begin{equation}\label{hA}
  h_\ell(x):=
  \begin{cases}
    x, & \ell=1,
\\
\binom{x-1}{\ell-1},&\ell\ge2,
  \end{cases}
\end{equation}
and $R$  counts the occurrences where less that $b$ different blocks in
$\pi_{L_1,\dots,L_B}$ are used. 
We represent the block lengths as in \refS{SB}, 
in particular \eqref{Ge}--\eqref{GeL}, again using an
infinite \iid{} sequence $X_i\sim\Ge(\frac12)$.
Then, the main term in \eqref{nsA} is sandwiched between $U$-statistics as
in \eqref{UB}, and we can apply \refP{PUN} to it.
(Alternatively, we can use \refP{PUN2} as in \refR{Ralt}.)

By  \eqref{pgfB}, $\E z^{X-1}=(2-z)\qw$, and calculations similar to 
\eqref{essB} yield
\begin{align}
\E\lrpar{ \binom{X-1}{k}\binom{X-1}{\ell}}
=D(k,\ell),
\qquad k,\ell\ge0.
\end{align}
Hence
\begin{equation}
  \E h_\ell(X)=
  \begin{cases}
    D(\ell-1,0)=1, & \ell\ge2,
\\
2, & \ell=1.
  \end{cases}
\end{equation}
Simple
calculations then yield, 
in addition to \eqref{Balpha}--\eqref{BvarX},
letting $b_1$ be the number of blocks of length 1,
\begin{gather}\label{Aalpha}
\mu
= \prod_{j=1}^b\E h_{\ell_i}(X)
= 2^{b_1},
\\
f_i(X)=
\begin{cases}
2^{b_1}\binom{X-1}{\ell_i-1}, \qquad \ell_i\ge2,
\\  
2^{b_1-1}X,\qquad \ell_i=1, 
\end{cases}
\\
\ggs_{ij}=\Cov\bigpar{f_i(X),f_j(X)}
=
\begin{cases}
  2^{2b_1}D(\ell_i-1,\ell_j-1)-2^{2b_1}, & \ell_i,\ell_j\ge2,
\\
2^{2b_1}\bigpar{\ell_i-1}, & \ell_i\ge2>\ell_j=1,
\\
2^{2b_1-1},& \ell_i=\ell_j=1
\end{cases}
\\
\Cov\bigpar{f_i(X),X}=
\begin{cases}
2^{b_1+1}\bigpar{\ell_i-1}, & \ell_i\ge2,
\\
2^{b_1},& \ell_i=1.
\end{cases}
\label{Aomega}
\end{gather}
Consequently, we obtain by \refPs{PUN} and \ref{PUNmom}
asymptotic normality in the following form.
\begin{theorem}\label{TA}
Let $\gs\in\fSm(\permA)$ have block lengths $\ell_1,\dots,\ell_b$. Then,
as \ntoo,
\begin{equation}\label{ta}
  \frac{\ns(\pinA)-2^{b_1-b}n^b/b!}{n^{b-1/2}}
\dto N\bigpar{0,\gamxx},
\end{equation}
where $\gamxx$ can be calculated by \eqref{cvtau2} and
\eqref{Aalpha}--\eqref{Aomega}.

Moreover, \eqref{ta} holds with convergence of all moments.
\end{theorem}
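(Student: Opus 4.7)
The plan is to follow the strategy of \refS{SB} for the analogous class $\fSx(\permB)$, with modifications reflecting the new block shape $\ell12\dots(\ell-1)$ given by \refP{PA}. A permutation in $\fSx(\permA)$ is again determined by its sequence of block lengths, so I would encode a uniformly random $\pinA$ by the same renewal construction: an i.i.d.\ $\Be(1/2)$ sequence marks block endpoints, and the successive block lengths form an i.i.d.\ sequence $X_1,X_2,\dots \sim \Ge(1/2)$; we set $B=\NN+(n)$ with the block lengths $L_i$ of $\pinA$ given by \eqref{GeL}. This puts us in exactly the renewal setup needed for \refP{PUN} (or, via conditioning on $S_{\NN+(n)}=n$, \refP{PUN2}), with the same underlying random walk as in \refS{SB}.

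The new combinatorial input is the counting identity \eqref{nsA} with kernel $h_\ell$ as in \eqref{hA}. A block of $\sigma$ of length $\ell\ge 2$ has the shape $\ell12\dots(\ell-1)$ and must be embedded inside a single block of $\pi$: its largest entry is forced onto the first position of that $\pi$-block (the only descent), and the remaining $\ell-1$ entries are chosen from the increasing tail, giving $\binom{L-1}{\ell-1}$ choices; a length-$1$ block contributes $L$ choices in a $\pi$-block of length $L$. Unlike in \refS{SB}, however, consecutive length-$1$ blocks of $\sigma$ may share a single $\pi$-block (using two of its increasing tail entries), and these degenerate contributions are collected in the remainder $R$. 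The ingredients for \refP{PUN} are then obtained from the probability generating function $\E z^{X-1}=(2-z)\qw$ together with the Delannoy expansion used in \eqref{essB}, which express $\E\bigpar{\binom{X-1}{k}\binom{X-1}{\ell}}=D(k,\ell)$ and thus yield \eqref{Aalpha}--\eqref{Aomega} in the same style as \refS{SB}.

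With these ingredients, the main term in \eqref{nsA} is a $U$-statistic in the $X_i$ of the form \eqref{U} with kernel $f(x_1,\dots,x_b)=\prod_j h_{\ell_j}(x_j)$, sandwiched between $U_{\NN-(n)}$ and $U_{\NN+(n)}$ exactly as in \eqref{UB}. Applying \refP{PUN} (or \refP{PUN2}) then gives \eqref{ta} for the main term, with $\gamxx$ computed from \eqref{cvtau2} and \eqref{Aalpha}--\eqref{Aomega}; since $X\in L^p$ and $h_\ell(X)\in L^p$ for every $p<\infty$, \refP{PUNmom} upgrades this to convergence of all moments. The main obstacle is controlling the remainder $R$: I would argue that since every summand of $R$ uses at most $b-1$ distinct blocks of $\pi$, it can be written as a finite sum of $U$-statistic-type expressions of order $\le b-1$ in the $X_i$'s evaluated at the random stopping index $B=\NN+(n)$, each of which is $\Op(n^{b-1})$ with all moments of that order (again using $X\in L^p$ for every $p$). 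Hence $R=\op(n^{b-1/2})$ with convergence of moments, and combining this with the asymptotic normality of the main term gives \eqref{ta} together with the claimed moment convergence.
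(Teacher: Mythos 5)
Your proposal is correct and follows essentially the same route as the paper: the same renewal encoding of block lengths as i.i.d.\ $\Ge(\tfrac12)$ variables, the same decomposition \eqref{nsA} with kernel $h_\ell$ and sandwiching of the main term as in \eqref{UB}, an appeal to \refPs{PUN} and \ref{PUNmom}, and the same disposal of the remainder $R$ by noting that each contribution using at most $b-1$ distinct blocks of $\pi$ is bounded by a $U$-statistic of order $\le b-1$ and hence is $\Op(n^{b-1})=\op(n^{b-1/2})$ with all moments of the corresponding order.
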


\begin{proof}
 The argument above yields the stated limit for the first (main) term on the
 \rhs{} of \eqref{nsA}. We show that the remainder term $R$ is negligible.

The term $R$ can be split up as a sum $\sum_{d=1}^{b-1} R_d$, where $R_d$
counts the occurences that use $d$ blocks in $\pi=\pi_{L_1,\dots,L_B}$.
Each $R_d$ may be written as a sum over $d$-tuples of blocks, and thus
bounded as in \eqref{UB} by some \Ustat{s} $U^{(d)}_{\NN+(n)}$
of order $d$.
Applying \refP{PUN} (or \refP{PU}, together with $\NN+(n)\le n$)
to the latter, we find $R_d=\Op(n^d)=\Op(n^{b-1})$, and thus
$R_d/n^{b-1/2}\pto0$.
For moments, we similarly have by \refP{PUNmom} or \ref{PU}
$\E|R_d|^p = O\bigpar{n^{pd}}=O\bigpar{n^{p(b-1)}}=o\bigpar{n^{p(b-1/2)}}$.
Hence, each $R_d$ is negligible in the limit \eqref{ta}, and the result follows.
\end{proof}

\begin{example}\label{EA}
For the number of  inversions, we have $\gs=21$ and $b=1$, $\ell_1=2$, $b_1=0$.
A calculation yields $\gamxx=1/4$, and \refT{TA} yields
\begin{equation}
  \frac{\ninv(\pinA)-n/2}{n\qq}\dto N(0,\tfrac14).
\end{equation}

In fact, we have the exact distribution
\begin{equation}
  \ninv(\pinA)\sim\Bi\bigpar{n-1,\tfrac12}.
\end{equation}
To see this, note that, by \refP{PA},
if we define $\xi_2,\dots,\xi_n$ by
\begin{equation}
\xi_i:=\ett{\text{no block begins at position }i},   
\end{equation}
then every sequence $\xi_2,\dots,\xi_n\in\setoi^{n-1}$ occurs for exactly
one permutation in $\fSn(\permA)$, and thus
$\xi_2,\dots,\xi_n$ are \iid{} $\Be(\frac12)$.
(This is a minor variation of the similar argument in \refS{SB}.)
Furthermore, for each $j\ge2$, the number of inversions $ij$ with $i<j$
equals $\xi_j$, so the total number is $\sum_2^n\xi_i\sim\Bi(n-1,\frac12)$.
\end{example}

\begin{remark}
Unlike in \refS{SB}, here the asymptotic mean depends not only on the
number of blocks in $\gs$, but also on their lengths.
\end{remark}

\begin{remark}
As in \refS{SB},
the asymptotic variance $\gamxx=0$ when $\gs=\iota_m=1\dotsm m$, in which
case $b=m$ and all blocks have length 1,
but
$\gamxx>0$ for any other $\gs\in\fSx(\permA)$.
\end{remark}

\section{Avoiding \set{\perm{132}, \perm{321}}}\label{SE}

In this section we consider $T=\set{\permE}$. 
Equivalent sets are 
\set{123, 231},  
\set{123,  312},   
\set{213,  321}.

It was shown by \citet{SS} that $|\fSn(\permE)|=\binom n2+1$.
(The case $\fSn(\permE)$ is thus more degenerate than the cases considered
above, in the sense that the allowed set of permutations is much smaller;
$|\fSn(\permE)|$ grows polynomially as (roughly) $n^2$, 
compared to $2^{n-1}$ in the previous cases forbidding two permutations of
length 3.)
\cite{SS} gave also the following characterization.
Given $k,\ell\ge1$ and $m\ge0$, let, in this section,  
\begin{equation}\label{pie}
\pi_{k,\ell,m}:=
(\ell+1,\dots,\ell+k,1,\dots,\ell,k+\ell+1,\dots,k+\ell+m)\in\fS_{k+\ell+m}.  
\end{equation}
Thus $\pi_{k,\ell,m}$ 
consists of three 
increasing runs of lengths $k$, $\ell$, $m$
(where the third run is empty when $m=0$).

\begin{prop}[{\cite[Proposition 13]{SS}}]\label{PE}
  \begin{equation}\label{pe}
    \fS_n(\permE)=\bigset{\pi_{k,\ell,n-k-\ell}:k,\ell\ge1,\, k+\ell\le n}
\cup\set{\iota_n}.
  \end{equation}
\qed 
\end{prop}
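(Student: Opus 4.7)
The plan is to prove equality in \eqref{pe} by showing both inclusions. The easy inclusion $\supseteq$ is a direct check: $\iota_n$ is increasing and so contains neither $\perm{132}$ nor $\perm{321}$, and for $\pi_{k,\ell,m}$ the three runs have value ranges $[\ell+1,\ell+k]$, $[1,\ell]$, $[k+\ell+1,n]$ arranged in the positional order (middle, low, high). Enumerating how a hypothetical witness $a<b<c$ for $\perm{321}$ or $\perm{132}$ distributes among the three runs, each case fails either by the increase within a single run or by the inequality between the value ranges, leaving only a short finite case-check.

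For $\subseteq$, I take $\pi\in\fSn(\permE)$ with $\pi\ne\iota_n$ and reconstruct its decomposition through successive use of the two forbidden patterns. Let $k$ be the position of the first descent, so $\pi_1<\dots<\pi_k>\pi_{k+1}$. Applying $\perm{321}$-avoidance to the triples $(\pi_k,\pi_{k+1},\pi_c)$ and $(\pi_k,\pi_{c_1},\pi_{c_2})$ shows that $\pi_{k+1}$ is the minimum of the tail $\pi_{k+1}\dotsm\pi_n$ and that all positions $c>k$ with $\pi_c<\pi_k$ carry values in increasing order. I then argue that these positions form a contiguous block $\{k+1,\dots,k+\ell\}$ for some $\ell\ge1$: otherwise there exist $c_1<c_2$ in $(k,n]$ with $\pi_{c_1}>\pi_k>\pi_{c_2}$, forcing $c_1>k+1$, and then the minimality of $\pi_{k+1}$ (together with $\pi$ being a permutation) gives $\pi_{k+1}<\pi_{c_2}$, so the triple at positions $k+1<c_1<c_2$ forms a $\perm{132}$-pattern. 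A symmetric $\perm{132}$-argument applied to pairs $c_1<c_2$ both in $(k+\ell,n]$ then gives $\pi_{k+\ell+1}<\dots<\pi_n$.

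At this stage $\pi$ consists of three increasing runs, and it only remains to identify the values. Writing $A,B,C$ for the value sets of the three runs, the construction forces $A\cup B=\{1,\dots,\pi_k\}$ and $C=\{\pi_k+1,\dots,n\}$, so $\pi_k=k+\ell$. Finally, if $B\ne\{1,\dots,\ell\}$, some value $v\le\ell$ would sit in $A$ at a position $i<k$, and then $\max B=\pi_{k+\ell}\ge\ell+1$, so $i<k<k+\ell$ with values $v\le\ell<\pi_{k+\ell}<k+\ell=\pi_k$ realises a forbidden $\perm{132}$-pattern. Hence $B=\{1,\dots,\ell\}$ and $\pi=\pi_{k,\ell,n-k-\ell}$.

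The main technical hurdle is the choice of witnesses in the $\subseteq$ direction: natural triples often produce $\perm{213}$- or $\perm{231}$-patterns instead, which are not forbidden, so each structural property has to be extracted from the very specific pair $\{\perm{132},\perm{321}\}$, and the minimality of $\pi_{k+1}$ plays a pivotal role. A shorter alternative would invoke the count $|\fSn(\permE)|=\binom{n}{2}+1$ quoted above from \cite{SS}: the right-hand side of \eqref{pe} consists of $\iota_n$ together with $\binom{n}{2}$ clearly distinct permutations $\pi_{k,\ell,n-k-\ell}$ (distinguished by their unique descent at position $k$ with value $k+\ell$), so combining the $\supseteq$ inclusion with this cardinality match would already suffice.
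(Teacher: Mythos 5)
Your proof is correct. Note, though, that the paper itself offers no proof of this proposition: it is quoted verbatim from Simion and Schmidt \cite[Proposition~13]{SS} and stamped with \emph{qed}, so there is nothing internal to compare against. Your direct argument is a sound self-contained substitute: the $\supseteq$ case-check is routine; in the $\subseteq$ direction the key steps all verify --- $321$-avoidance forces $\pi_{k+1}$ to be the minimum of the tail and forces the sub-$\pi_k$ values after position $k$ to be increasing, the $132$-witness at positions $k+1<c_1<c_2$ (with $\pi_{k+1}<\pi_{c_2}<\pi_{c_1}$) correctly rules out a gap in that block, the same device gives monotonicity of the third run, and the final $132$-witness at positions $i<k<k+\ell$ pins down the value sets. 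The cardinality shortcut you mention is also legitimate here, since $|\fS_n(132,321)|=\binom n2+1$ is independently established in \cite{SS} and the $\binom n2$ permutations $\pi_{k,\ell,n-k-\ell}$ are pairwise distinct (recover $k$ as the unique descent position and $\ell$ as $\pi_k-k$); combined with your $\supseteq$ inclusion that already closes the argument, and it is probably the route the paper implicitly relies on. Either version is acceptable; the direct one has the advantage of not leaning on the enumeration.
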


For asymptotic results, we may ignore the case when $\pinx{\permE}=\iota_n$,
which has probability $1/(\binom n2+1)=o(1)$. 
Conditioning on $\pinx{\permE}\neq\iota_n$, we see by \refP{PE} that
$\pinx{\permE}=\pi_{K,L,n-K-L}$, where $K$ and $L$ are random with
$(K,L)$ uniformly distributed over the set $\set{K,L\ge1:K+L\le n}$.
As \ntoo, we thus have $(K/n,L/n)\dto (X,Y)$ with $(X,Y)$ uniformly
distributed on the triangle $\set{(X,Y)\in\bbR_+^2:X+Y\le1}$. Equivalently,
letting $Z:=1-X-Y$, 
\begin{equation}\label{peq}
\Bigpar{\frac{K}n,\frac{L}n,\frac{n-K-L}{n}}\dto (X,Y,Z)\sim\Dir(1,1,1),
\end{equation}
where we recall that the Dirichlet distribution $\Dir(1,1,1)$ is the uniform
distribution on the simplex $\set{(x,y,z)\in\bbR_+^3: x+y+z=1}$.

If $\gs=\pi_{i,j,p}$ for some $i,j,p$, then it is easily seen that
an  occurrence of $\gs$ in $\pi_{k,\ell,m}$ is obtained by selecting
$i$, $j$ and $p$ elements from the three runs of $\pi_{k,\ell,m}$, and thus  
\begin{equation}\label{pet}
n_{\gs}(\pi_{k,\ell,m})=\binom ki\binom \ell j\binom mp.  
\end{equation}
Similarly, if $\gs=\iota_i$, 
then 
an  occurrence of $\gs$ in $\pi_{k,\ell,m}$ is obtained by selecting $i$
elements from either the union of the first and last run, or from the union
of the two last. Hence, by inclusion-exclusion,
\begin{equation}\label{pett}
n_{\gs}(\iota_i)=\binom {k+m}i+\binom {\ell+m}i-\binom mi.  
\end{equation}

These exact formulas
 together with the description of $\pinx{\permE}$ above and \eqref{peq}
yield  the following asymptotic result.
 
\begin{theorem}\label{TE}
Let $\gs\in\fSx(\permE)$. 
Then   the following hold as \ntoo.
  \begin{romenumerate}
    \item \label{TEa}
If $\gs=\pi_{i,j,p}$ for some $i,j,p$, then 
\begin{equation}\label{tea}
  n^{-(i+j+p)}\ns(\pinx{\permE}) \dto 
W_{i,j,p}:=\frac{1}{i!\,j!\,p!} X^i Y^jZ^p,
\end{equation}
where $(X,Y,Z)\sim \Dir(1,1,1)$.
\item \label{TEb}
If $\gs=\iota_{i}$, then
\begin{equation}\label{teb}
  n^{-i}\ns(\pinx{\permE}) \dto 
W_i:=\frac{1}{i!}\bigpar{(X+Z)^i+(Y+Z)^i-Z^i},
\end{equation}
with $(X,Y,Z)\sim \Dir(1,1,1)$ as in \ref{TEa}.
  \end{romenumerate}
Moreover, these hold jointly
  for any set of such $\gs$, and with convergence of all moments.
In particular, in case \ref{TEa},
\begin{equation}\label{teae}
  n^{-(i+j+p)}\E\ns(\pinx{\permE}) \dto 
\E W_{i,j,p}=\frac{2}{(i+j+p+2)!}
\end{equation}
and in case \ref{TEb},
\begin{equation}\label{tebe}
  n^{-i}\E\ns(\pinx{\permE}) \dto 
\E W_{i}=\frac{4i+2}{(i+2)!}
\end{equation}
\end{theorem}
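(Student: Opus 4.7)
The plan is to combine the exact description in \refP{PE} with the continuous mapping theorem. First I would observe that $\P(\pinx{\permE}=\iota_n)=1/(\binom n2+1)=O(n^{-2})$, so this event is negligible (in particular, for any fixed $\gs$ the contribution to $n^{-|\gs|}\ns(\pinx{\permE})$ from this event is bounded and vanishes in probability, so I can safely ignore it). Conditioning on the complement, \refP{PE} gives $\pinx{\permE}=\pi_{K,L,n-K-L}$ where $(K,L)$ is uniformly distributed over the lattice triangle $\set{(k,\ell)\in\bbZ_{\ge1}^2:k+\ell\le n}$. A standard Riemann-sum argument shows that as \ntoo, the uniform distribution on this rescaled lattice triangle converges weakly to the uniform distribution on $\set{(x,y)\in\bbR_+^2:x+y\le1\}}$, so that
\begin{equation*}
\Bigpar{\tfrac{K}{n},\tfrac{L}{n},\tfrac{n-K-L}{n}}\dto (X,Y,Z)\sim\Dir(1,1,1),
\end{equation*}
which is \eqref{peq}.

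Next I would apply the exact formulas \eqref{pet} and \eqref{pett}. Using the elementary fact that $\binom{xn}{k}/n^k\to x^k/k!$ uniformly for $x\in\oi$, the continuous mapping theorem applied to \eqref{pet} and the convergence above yields
\begin{equation*}
n^{-(i+j+p)}\ns(\pinx{\permE})=\frac{1}{i!\,j!\,p!}\Bigpar{\tfrac{K}{n}}^{\!i}\Bigpar{\tfrac{L}{n}}^{\!j}\Bigpar{\tfrac{n-K-L}{n}}^{\!p}+o(1)\dto W_{i,j,p},
\end{equation*}
proving \ref{TEa}. For \ref{TEb}, the same argument applied to the three binomial terms in \eqref{pett} gives \eqref{teb}. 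Joint convergence over any finite set of $\gs$ is automatic, since everything is a continuous function of the single pair $(K/n,L/n)$.

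For moment convergence, the trivial bound $\ns(\pi)\le\binom{n}{|\gs|}$ valid for every permutation $\pi$ shows that $n^{-|\gs|}\ns(\pinx{\permE})$ is bounded by a deterministic constant depending only on $\gs$. Hence all powers are uniformly bounded, in particular uniformly integrable, and convergence in distribution upgrades to convergence of all moments.

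Finally, the explicit formulas \eqref{teae} and \eqref{tebe} follow from the Dirichlet moments
\begin{equation*}
\E X^iY^jZ^p=\frac{\Gamma(3)}{\Gamma(3+i+j+p)}\,i!\,j!\,p!=\frac{2\,i!\,j!\,p!}{(i+j+p+2)!},
\end{equation*}
which yields \eqref{teae} directly; for \eqref{tebe} one uses that $X+Z=1-Y$ and $Y+Z=1-X$ are both $\mathrm{Beta}(1,2)$ distributed, so $\E(1-Y)^i=\E(1-X)^i=2/(i+2)$, while $\E Z^i=2/((i+1)(i+2))$, and combining gives $\E W_i=(4i+2)/(i+2)!$. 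The whole proof is essentially soft; the only point requiring a modicum of care is the passage from the lattice to the continuum in \eqref{peq}, but this is a routine Riemann-sum computation, so I do not anticipate a genuine obstacle.
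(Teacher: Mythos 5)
Your proposal is correct and follows essentially the same route as the paper: the paper's proof likewise discards the $\iota_n$ case, uses \eqref{peq} together with the exact counts \eqref{pet}--\eqref{pett} for the distributional limits, invokes boundedness of the normalized variables for moment convergence, and computes \eqref{teae}--\eqref{tebe} via the Dirichlet/beta moment formula \eqref{dirabc} and the observation $X+Z\eqd Y+Z\sim B(2,1)$. You have merely spelled out the Riemann-sum and continuous-mapping details that the paper leaves implicit in the discussion preceding the theorem.
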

\begin{proof}
The limits in distribution \eqref{tea} and \eqref{teb} hold 
(with joint convergence)
by the discussion before the theorem. 
Moment convergence holds because the normalized variables in \eqref{tea} and
\eqref{teb} are bounded (by 1).  
Finally, the expectation in \eqref{teae} 
is easily
computed using the multidimensional extension of the beta integral
\cite[(5.14.2)]{NIST}, which implies
\begin{equation}\label{dirabc}
  \E X^a Y^b Z^c = \frac{2\Gamma(a+1)\Gamma(b+1)\gG(c+1)}{\gG(a+b+c+3)},
\qquad a,b,c>-1.
\end{equation}
For the expectation in  \eqref{tebe}, we note also that $X+Z\eqd Y+Z \sim
B(2,1)$; the result follows by a short calculation.
\end{proof}

Higher moments of $W_{i,j,p}$ follow also from \eqref{dirabc}.

\begin{corollary}\label{CE}
The number of inversions
has the asymptotic distribution
\begin{equation}\label{tec}
  n^{-2}\nx{\ww}(\pinx{\permE}) \dto  W:=X Y,
\end{equation}
with $(X,Y)$ as above; the limit variable $W$ has 
density function
\begin{equation}\label{cepdf}
  2\log\bigpar{1+\sqrt{1-4x}} -   2\log\bigpar{1-\sqrt{1-4x}}
,
\qquad 0<x<1/4,
\end{equation}
and moments
\begin{equation}\label{ewr}
  \E W^r = 2\frac{r!^2}{(2r+2)!}, \qquad r>0.
\end{equation}
\end{corollary}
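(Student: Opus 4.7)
The plan is to reduce everything to \refT{TE}. Note that the inversion pattern $\ww=\perm{21}$ fits the template \eqref{pie} with $i=j=1$ and $p=0$ (an empty third run), so $\ww=\pi_{1,1,0}$. Thus part \ref{TEa} of \refT{TE} applies immediately and gives
\begin{equation*}
  n^{-2}\nx{\ww}(\pinx{\permE})\dto W_{1,1,0}=\tfrac{1}{1!\,1!\,0!}X^{1}Y^{1}Z^{0}=XY,
\end{equation*}
which is \eqref{tec}, with joint convergence of all moments also inherited from \refT{TE}.

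For the moment formula \eqref{ewr}, the plan is to apply the Dirichlet integral identity \eqref{dirabc} with $a=b=r$ and $c=0$, which gives
\begin{equation*}
  \E W^{r}=\E X^{r}Y^{r}=\frac{2\,\gG(r+1)^{2}\gG(1)}{\gG(2r+3)}=\frac{2\,r!^{2}}{(2r+2)!}.
\end{equation*}
This is a one-line computation, so the only genuine work in the corollary is the density \eqref{cepdf}.

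For the density, I would start from the observation that $(X,Y,Z)\sim\Dir(1,1,1)$ means $(X,Y)$ is uniform on the triangle $\{x,y\ge0,\ x+y\le1\}$ with joint density $2$. Hence for $0<w<\tfrac14$ (the maximum of $xy$ on the triangle, attained at $x=y=\tfrac12$),
\begin{equation*}
  \P(W\le w)=2\int_{0}^{1}\min\bigpar{w/x,\,1-x}\dd x.
\end{equation*}
The quadratic $x(1-x)=w$ has roots $x_{\pm}=(1\pm\sqrt{1-4w})/2$, and $w/x\le 1-x$ precisely on $[x_{-},x_{+}]$. Splitting the integral at $x_{\pm}$ and using $x_{-}+x_{+}=1$, $x_{-}x_{+}=w$, I would obtain
\begin{equation*}
  \P(W\le w)=1-\sqrt{1-4w}+2w\log(x_{+}/x_{-}).
\end{equation*}

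The main (but routine) obstacle is the differentiation: on differentiating in $w$, one uses $\tfrac{dx_{\pm}}{dw}=\mp(1-4w)^{-1/2}$, which yields $\tfrac{d}{dw}\log(x_{+}/x_{-})=-1/(w\sqrt{1-4w})$, and the two terms $\pm 2/\sqrt{1-4w}$ cancel. What remains is $f_{W}(w)=2\log(x_{+}/x_{-})$, which is exactly \eqref{cepdf} after rewriting $x_{\pm}=(1\pm\sqrt{1-4w})/2$. As a consistency check, one can verify that $\int_{0}^{1/4}w^{r}f_{W}(w)\dd w$ recovers the moment formula already established, confirming both expressions.
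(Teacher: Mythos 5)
Your proposal is correct and follows essentially the same route as the paper: identify $\ww=\pi_{1,1,0}$ and invoke \refT{TE}\ref{TEa} for \eqref{tec}, apply \eqref{dirabc} for the moments, and obtain the density by computing the distribution function of $XY$ as twice an area over the triangle and differentiating. You merely carry out explicitly the ``simple calculation'' that the paper leaves to the reader (working with $\P(W\le w)$ rather than $\P(W>t)$, which is immaterial), and your computation checks out.
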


\begin{proof}
 We have $21=\pi_{1,1,0}$, and thus \eqref{tea} yields \eqref{tec}.
The formula \eqref{ewr} for the  moments $\E W^r=\E X^rY^r$ follow by
\eqref{dirabc}.  
Finally, for $0<t<1/4$, $\P(W>t)=\P(XY>t)$ equals 2 times the area of the
set
\set{(x,y)\in\bbR_+^2:x+y\le1,\,xy> t}.
A differentiation and a simple calculation yield \eqref{cepdf}.
\end{proof}

\begin{example}\label{EE}
  For the four allowed patterns of length 3, we find
  \begin{align}
n^{-3}\E n_{123}(\pinx{\permE}) &\to \E W_{3} =\frac{7}{60},
\\   
n^{-3}\E n_{213}(\pinx{\permE}) &\to \E W_{1,1,1} =\frac{1}{60},
\\   
n^{-3}\E n_{231}(\pinx{\permE}) &\to \E W_{2,1,0} =\frac{1}{60},
\\   
\E n_{312}(\pinx{\permE}) &= \E W_{1,2,0} =\frac{1}{60}.
  \end{align}
(See \citet{Zhao} for exact formulas for finite $n$.)
Note that by \eqref{teae}, all $W_{i,j,q}$ with the same $i+j+q$ have the
same expectation; their distributions differ, however, in general, as is
shown by higher moments. For example, in the present example, by
\eqref{dirabc}, 
$\E W_{1,1,1}^2=2/7!$ and $\E W_{2,1,0}^2=3/7!$.
\end{example}

The expected number of occurrences of $\gs$ can also easily be found exactly
for finite $n$, as follows.
As noted above,  \eqref{teae} shows that all $\gs$ in \ref{TEa} of the same
length  
occur in $\pinE$ with asymptotically equal frequencies.
In fact, this holds also exactly, for any $n$.
(Note also that \eqref{teae} is an immediate consequence of \eqref{ten}.)

\begin{theorem}\label{TEN}
  Let $\gs=\pi_{i,j,p}$, with $i,j\ge1$ and $p\ge0$.
Then, for any $n$,
\begin{equation}\label{ten}
  \E n_\gs(\pinE)=\frac{\binom{n+2}{i+j+p+2}}{\binom n2+1}.
\end{equation}
\end{theorem}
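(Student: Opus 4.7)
The plan is to reduce this to a standard Vandermonde-type identity. First I would observe that, since $\gs = \pi_{i,j,p}$ with $i,j \ge 1$ contains the descent $(j+i, 1)$ at positions $i, i+1$, the identity permutation $\iota_n$ has no occurrence of $\gs$, so $n_\gs(\iota_n) = 0$. Combined with \refP{PE} and the exact count $|\fS_n(\permE)| = \binom n2 + 1$, this gives
\begin{equation*}
\E n_\gs(\pinE) = \frac{1}{\binom n2 + 1} \sum_{\substack{k,\ell \ge 1 \\ k+\ell \le n}} n_\gs\bigpar{\pi_{k,\ell,n-k-\ell}}.
\end{equation*}
Substituting \eqref{pet} then reduces the theorem to showing
\begin{equation*}
\sum_{\substack{k,\ell \ge 1 \\ k+\ell \le n}} \binom ki \binom \ell j \binom{n-k-\ell}{p} = \binom{n+2}{i+j+p+2}.
\end{equation*}

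Next I would note that the constraints $k \ge 1$ and $\ell \ge 1$ can be dropped freely: since $i,j \ge 1$, one has $\binom 0 i = \binom 0 j = 0$, so the missing terms vanish. Thus the left-hand side equals the unrestricted Vandermonde-type triple sum $\sum_{a+b+c = n} \binom ai \binom bj \binom cp$ over $a,b,c \ge 0$.

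To evaluate this sum, the cleanest route is the generating function
\begin{equation*}
\sum_{k \ge 0} \binom{k}{i} x^k = \frac{x^i}{(1-x)^{i+1}},
\end{equation*}
which gives
\begin{equation*}
\sum_n \Bigpar{\sum_{a+b+c=n} \binom ai \binom bj \binom cp} x^n
= \frac{x^{i+j+p}}{(1-x)^{i+j+p+3}}.
\end{equation*}
Reading off the coefficient of $x^n$ yields $\binom{n+2}{i+j+p+2}$, which completes the proof. (Alternatively, two applications of the Chu--Vandermonde identity $\sum_a \binom ai \binom{n-a}{j} = \binom{n+1}{i+j+1}$ give the same result directly.)

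There is no real obstacle here: the only content beyond bookkeeping is recognizing the triple Vandermonde sum. A brief sanity check against \eqref{teae} is worthwhile, since $\binom{n+2}{i+j+p+2}/\bigpar{\binom n2 + 1} \sim \frac{2 n^{i+j+p}}{(i+j+p+2)!}$ as $\ntoo$, matching the asymptotic mean computed from the Dirichlet limit.
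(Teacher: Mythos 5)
Your proposal is correct. The setup is the same as the paper's: discard $\iota_n$ (which contains no occurrence of $\gs$ since $i,j\ge1$ forces a descent), average \eqref{pet} over the $\binom n2$ permutations $\pi_{k,\ell,n-k-\ell}$, and reduce everything to the identity
\begin{equation*}
\sum_{a+b+c=n}\binom ai\binom bj\binom cp=\binom{n+2}{i+j+p+2}.
\end{equation*}
Where you diverge is in how this identity is established. The paper proves it bijectively: an occurrence of $\gs$ in $\pi_{k,\ell,m}$ is an increasing sequence $q_1<\dots<q_i\le k<q'_1<\dots<q'_j\le k+\ell<q''_1<\dots<q''_p\le n$, so summing over the class amounts to counting the sequences $(q_1,\dots,q_i,k,q'_1,\dots,q'_j,k+\ell,q''_1,\dots,q''_p)$; shifting the entries after $k$ by $1$ and those after $k+\ell$ by $2$ identifies these with the $(i+j+p+2)$-subsets of $[n+2]$. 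Your generating-function (or iterated Chu--Vandermonde) evaluation of the convolution $x^{i+j+p}/(1-x)^{i+j+p+3}$ is equally valid and perhaps more mechanical to verify; the paper's bijection has the advantage of explaining \emph{why} the answer is a single binomial coefficient and of absorbing the summation over $(k,\ell)$ and the occurrence count into one counting problem. Your observation that the constraints $k,\ell\ge1$ can be dropped because $\binom0i=\binom0j=0$ is a point the paper handles implicitly through the bijection, and your closing sanity check against \eqref{teae} matches the remark the paper makes just before the theorem.
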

\begin{proof}
By \eqref{pet} and the discussion before it, for any given $k,\ell,m$, the
number of occurences of $\gs$ in $\pi_{k,\ell,m}$ equals the number of 
sequences $q_1,\dots,q_i,q'_1,\allowbreak\dots,\allowbreak q'_j,q''_1,\dots,q''_p$ such that
\begin{equation}\label{qqq}
1\le q_1< \dots <q_i \le k < q'_1<\dotsm< q'_j \le\ell<q''_1<\dots<q''_m\le n. 
\end{equation}
  Since $\gs$ does not occur in $\iota_n$, the total number of occurences
of $\gs$ in all elements of $\fSn(\permE)$ is thus, recalling \eqref{pe},
 equal to the number of
all sequences $(q_1, \dots ,q_i , k , q'_1,\dots,q'_j,\ell,q''_1,\dots,q''_m)$
of integers satisfying \eqref{qqq}. By increasing $k$ and all $q'_r$ by 1,
and $\ell$ and all $q''_s$ by 2, we obtain a bijection with the collection
of all subsets of $i+j+q+2$ elements of \set{1,\dots,n+2}.
Hence, the total number of occurrences is $\binom{n+2}{i+j+p+2}$, and
\eqref{ten} follows.
\end{proof}

\section{Avoiding \set{\permAAA}}
\label{S3a}

We proceed to avoiding sets of three permutations.
In this section we avoid $T=\set{\permAAA}$. 
An equivalent set is
\set{123,132,213}.

It was shown by \citet{SS} that $|\fSn(\permAAA)|=F_{n+1}$,
the $(n+1)$th Fibonacci number (with the initial conditions $F_0=0$, $F_1=1$);
they also gave the following characterization (in an equivalent form).

\begin{prop}[{\cite[Proposition $15^*$]{SS}}]\label{PAAA}
A permutation $\pi$ belongs to the class 
$\fSx(\permAAA)$ if and only if every block in
$\pi$ is decreasing and has length $\le 2$, \ie, every block is $1$ or $21$.
\qed
\end{prop}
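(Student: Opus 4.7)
The plan is to derive \refP{PAAA} by combining \refP{PB} with one elementary observation. Since $\set{\permAAA}\supset\set{231,312}$, a permutation avoids $\set{\permAAA}$ if and only if it avoids $\set{231,312}$ and additionally avoids $321$. So, taking \refP{PB} for granted, I only need to characterize, among permutations all of whose blocks are decreasing, those that additionally avoid $321$.

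One direction is immediate: if $\pi$ has a decreasing block of length $\ell\ge 3$, then the first three entries of that block already form a $321$ pattern, so $\pi$ cannot avoid $321$. Hence every block of $\pi$ must have length $\le 2$, \ie, is either $1$ or $21$.

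For the converse I would verify that if every block of $\pi=B_1*\dots*B_b$ is $1$ or $21$, then no three positions $i<j<k$ can realize the pattern $321$. The key structural fact (\refSS{SSblocks}) is that every entry of $B_p$ is strictly smaller than every entry of $B_q$ whenever $p<q$, and since each block has length at most $2$ the three positions $i<j<k$ must span at least two distinct blocks. A short case split then finishes the argument: if $i,j$ share a block preceding the block containing $k$, then $\pi_k$ is the maximum of the three and the triple cannot form $321$; if $i$ lies in a block preceding the one containing $j$ and $k$, then $\pi_i$ is the minimum and again $321$ is impossible; and if $i,j,k$ lie in three distinct blocks, then $(\pi_i,\pi_j,\pi_k)$ is strictly increasing. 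I do not expect any real obstacle beyond enumerating these cases carefully; the whole argument is driven by the two observations that a decreasing run of length~$3$ is already a $321$, and that values in later blocks exceed values in earlier ones.
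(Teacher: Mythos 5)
Your argument is correct: combining \refP{PB} with the observation that a decreasing block of length $\ge 3$ contains $321$, while any $321$ occurrence must lie inside a single block (values in later blocks exceed all values in earlier blocks), gives exactly the stated characterization. The paper itself offers no proof, merely citing \cite{SS}, but its remark immediately after the proposition (``Cf.\ \refP{PB}; we have here added the restriction that block lengths are 1 or 2'') shows this is precisely the intended route, so your proposal fills in the same argument the author had in mind.
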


Cf.\ \refP{PB}; we have here added the restriction that block lengths are 1
or 2.
With this restriction in mind, we use again the notation \eqref{piB} and
note that \eqref{nsB} holds.
A permutation $\pi\in\fSn(\permAAA)$ is thus of the form
$\pi_{L_1,\dots,L_B}$ for some sequence $L_1,\dots,L_B$ of \set{1,2} with
sum $n$; furthermore, this yields a bijection with all such sequences.

Define $p$ to be 
the golden ratio:
\begin{equation}
  p:=
\frac{\sqrt5-1}2,
\end{equation}
so that $p+p^2=1$.
Let $X$ be a random variable with the distribution
\begin{equation}
\P(X=1)=p,\qquad
\P(X=2)=p^2.
\end{equation}
Consider an \iid{} sequence $X_1,X_2,\dots$ of copies of $X$, and let
$S_n:=\sumin X_i$. Then for any sequence $\ell_1,\dots,\ell_b$ with $b\ge1$,
$\ell_i\in\set{1,2}$ and $\sum_{1}^b\ell_i=n$,
\begin{equation}
  \P\bigpar{X_i=\ell_i,\, i=1,\dots,b}
=\prod_{i=1}^b p^{\ell_i}
=p^n.
\end{equation}
This probability is thus the same for all such sequences, which means that,
conditioned on the event that $S_b=n$ for some (unspecified) $b\ge1$, 
the sequence $(X_1,\dots,X_b)$ is equidistributed over all
\set{1,2}-sequences with sum $n$; we have seen above that this equals the
distribution of the sequence of block lengths $(L_1,\dots,L_B)$ of 
a random permutation $\pinAAA$ in $\fSn(\permAAA)$.
Consequently, 
recalling \eqref{NN+} and \refR{RNN},
\begin{equation}
(L_1,\dots,L_B)
\eqd \bigpar{(X_1,\dots,X_{\NN+(n)})\mid S_{\NN+(n)}=n}  .
\end{equation}
It follows from this and \eqref{nsB}
that if $\gs=\pi_{\ell_1,\dots,\ell_b}\in\fSx(\permAAA)$,
and $f$ is defined by \eqref{fB},
then 
$\ns(\pinAAA)$ has the same distribution as 
$U_{\NN+(n)}$ conditioned on $S_{\NN+(n)}=n$. 
Consequently, \refP{PUN2} applies and yields asymptotic normality of
$\ns(\pinAAA)$, and \refP{PUNmom} adds moment convergence.

To find the parameters, let $\gs$ have $b_1$ blocks of length 1 and $b_2$
blocks of length 2 (so $b_1+b_2=b$ and $b_1+2b_2=|\gs|$).
Then, noting $\binom X2=X-1$,
\begin{align}\label{AAAalpha}
  \nu&=\E X=p+2p^2=2-p=\frac{5-\sqrt5}{2},
\\
\Var X&=p^3=2p-1=\sqrt5-2,
\\
\E\binom{X}2&=\P(X=2)=p^2=1-p,
\\
\mu&=  (2-p)^{b_1}(1-p)^{b_2} 
= \Bigparfrac{5-\sqrt5}{2}^{b_1}\Bigparfrac{3-\sqrt5}{2}^{b_2},
\label{muAAA}
\\
f_i(X)&=
\begin{cases}
  (2-p)^{b_1-1}(1-p)^{b_2} X, & \ell_i=1,
\\
  (2-p)^{b_1}(1-p)^{b_2-1}(X-1), & \ell_i=2,
\end{cases}
\\
\ggs_{ij}&=
\begin{cases}
 (2-p)^{2b_1-2}(1-p)^{2b_2}(2p-1),  & \ell_i=\ell_j=1,\\
 (2-p)^{2b_1-1}(1-p)^{2b_2-1}(2p-1),  & \ell_i=1< \ell_j=2,\\
 (2-p)^{2b_1}(1-p)^{2b_2-2}(2p-1),  & \ell_i=\ell_j=2.
\end{cases}
\\
\Cov&\bigpar{f_i(X),X}=
\begin{cases}
 (2-p)^{b_1-1}(1-p)^{b_2}(2p-1),  & \ell_i=1,
\\
 (2-p)^{b_1}(1-p)^{b_2-1}(2p-1),  & \ell_i=2.
\end{cases}
\label{AAAomega}
\end{align}

We summarize.
\begin{theorem}
  \label{TAAA}
Let $\gs\in\fSm(\permAAA)$ have block lengths $\ell_1,\dots,\ell_b$. Then,
as \ntoo,
\begin{equation}\label{taz}
  \frac{\ns(\pinAAA)-\mu n^b/b!}{n^{b-1/2}}
\dto N\bigpar{0,\gamxx},
\end{equation}
where $\mu$ is given by \eqref{muAAA} and
$\gamxx$ can be calculated by \eqref{cvtau2} and
\eqref{AAAalpha}--\eqref{AAAomega}.

Moreover, \eqref{taz} holds with convergence of all moments.
\qed
\end{theorem}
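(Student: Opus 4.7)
The plan is to follow the route that the paragraphs preceding the theorem already set up, adapting the U-statistic strategy of \refS{SB} but with a conditioning trick appropriate to the restricted block lengths. First I would invoke \refP{PAAA} to identify $\fSn(\permAAA)$ bijectively with the set of compositions of $n$ into parts from $\set{1,2}$, via the block-length encoding $\pi=\pi_{L_1,\dots,L_B}$ (same $*$-decomposition notation as in \refS{SB}). Under this encoding, formula \eqref{nsB} with $f$ as in \eqref{fB} expresses $\ns(\pinAAA)$ as a symmetric sum over $b$-tuples of block lengths, so the task reduces to a limit theorem for this functional of the random block-length sequence.

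The key observation is the coupling with the iid sequence $(X_i)$ of golden-ratio copies of $X$. Because any fixed composition $(\ell_1,\dots,\ell_b)\in\set{1,2}^b$ of $n$ satisfies
\begin{equation*}
  \P\bigpar{X_1=\ell_1,\dots,X_b=\ell_b}=\prod_{i=1}^b p^{\ell_i}=p^n,
\end{equation*}
this probability depends only on $n$, not on the composition. Hence, conditioned on the event $\set{S_{\NN+(n)}=n}$ that the random walk hits $n$ exactly (\cf{} \refR{RNN}), the sequence $\bigpar{X_1,\dots,X_{\NN+(n)}}$ is uniformly distributed over all $\set{1,2}$-compositions of $n$, matching the distribution of $(L_1,\dots,L_B)$ for a uniform $\pinAAA$. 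I would spell out this equidistribution as the one nontrivial step of the argument.

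With that in hand, $\ns(\pinAAA)$ has the same distribution as $U_{\NN+(n)}$ conditioned on $S_{\NN+(n)}=n$, where $U_n$ is the \Ustat{} from \eqref{U} with $f$ from \eqref{fB}. Since $X$ is integer-valued and $\nu=\E X=2-p>0$, \refP{PUN2} delivers the asymptotic normality \eqref{taz}, and \refP{PUNmom} upgrades it to convergence of all moments (the integrability hypotheses are trivial because $X\in\set{1,2}$ and $f$ is a bounded polynomial in its arguments). The parameters $\nu$, $\Var X$, $\mu$, $f_i$, $\ggs_{ij}$, and $\Cov(f_i(X),X)$ are then computed directly from the two-point distribution of $X$, using the identities $p+p^2=1$ and $\Var X=p^3=2p-1$; these yield precisely the formulas \eqref{AAAalpha}--\eqref{AAAomega} that feed into \eqref{cvtau2}.

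I do not expect any real obstacle: the only conceptual step is the equidistribution under conditioning, which is immediate from the uniformity $\P(\cdot)=p^n$, and everything else is a plug-in to \refPs{PUN2}{PUNmom}. The elementary calculations for the covariance parameters are tedious but routine, and the boundedness of $X$ makes the moment convergence essentially automatic; there is no need to verify any Petrov-type tail conditions separately.
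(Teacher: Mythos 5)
Your proposal is correct and follows essentially the same route as the paper: the block-length bijection from \refP{PAAA}, the golden-ratio tilting so that every $\set{1,2}$-composition of $n$ has probability $p^n$, the resulting identification of $\ns(\pinAAA)$ with $U_{\NN+(n)}$ conditioned on $S_{\NN+(n)}=n$, and the application of \refP{PUN2} and \refP{PUNmom}. The parameter computations you describe are exactly those in \eqref{AAAalpha}--\eqref{AAAomega}.
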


\begin{example}
  \label{EAAA}
For the number of inversions, $\gs=21$, $b=1=b_2$ and $b_1=0$.
Hence, $\mu=1-p=(3-\sqrt5)/2$ and, 
by a calculation, $\gamxx=(2-p)^{-3}\Var X= 5^{-3/2}$.
Consequently,
\begin{equation}\label{tazi}
  \frac{\ninv(\pinAAA)-\frac{3-\sqrt5}{2} n}{n^{1/2}}
\dto N\bigpar{0,5^{-3/2}}.
\end{equation}
\end{example}

\begin{remark}
  Again, $\gamxx>0$ unless $\gs=\iota_m$.
\end{remark}

\section{Avoiding \set{\permCCC}} 
\label{S3c}

In this section we avoid $\set{\permCCC}$.
Equivalent sets are
\set{132,213,231},
\set{132,213,312},
\set{213,231,312}.

It was shown by \citet{SS} that $|\fSn(\permCCC)|=n$,
together with the following characterization (in an equivalent form).
In this section, let
\begin{equation}\label{piCCC}
  \pi_{k,\ell}:=\iotax_k*\iota_{l}=(k,\dots,1,k+1,\dots,k+\ell)
\in\fS_{k+\ell},
\qquad k\ge1,\,\ell\ge0.
\end{equation}
Note that $\pi_{1,\ell}=\iota_{1+\ell}$.

\begin{prop}[{\cite[Proposition $16^*$]{SS}}]\label{PCCC}
  \begin{equation*}
 \fSn(\permCCC)=\set{\pi_{k,n-k}:1\le k\le n}.
\qedtag
  \end{equation*}
\end{prop}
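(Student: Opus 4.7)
The plan is to establish both inclusions of the claimed equality directly. For the easy direction, that each $\pi_{k,n-k}$ lies in $\fSn(\permCCC)$, I would do a short case analysis on three arbitrary positions $i_1<i_2<i_3$. Since $\pi_{k,n-k}=(k,k-1,\dots,1,k+1,\dots,n)$ is a decreasing block on the values $\{1,\dots,k\}$ followed by an increasing tail on the values $\{k+1,\dots,n\}$, with the first block pointwise below the second, the induced patterns are only $321$ (all three positions in the decreasing block), $123$ (last two or all three in the increasing tail), or $213$ (first two in the decreasing block and the third in the tail). None of these is $132$, $231$, or $312$.

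For the converse, fix $\pi\in\fSn(\permCCC)$ and set $k:=\pi_1$. I would carry out a three-step structural reduction. \emph{Step one}: every value less than $k$ appears strictly to the left of every value greater than $k$ in $\pi$, since otherwise indices $1<i<j$ with $\pi_i>k>\pi_j$ would make $(\pi_1,\pi_i,\pi_j)$ a $231$-occurrence. This forces $\{\pi_2,\dots,\pi_k\}=\{1,\dots,k-1\}$ and $\{\pi_{k+1},\dots,\pi_n\}=\{k+1,\dots,n\}$. \emph{Step two}: the segment $\pi_2,\dots,\pi_k$ is strictly decreasing, since any ascent there together with $\pi_1=k$ would give a $312$; hence $\pi_i=k+1-i$ for $1\le i\le k$. \emph{Step three}: the tail $\pi_{k+1},\dots,\pi_n$ is strictly increasing, since any descent there combined with an earlier value below $k$ (for instance $\pi_2=k-1$ when $k\ge 2$) would give a $132$. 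These three steps together identify $\pi$ as $\pi_{k,n-k}$.

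I do not anticipate any real obstacle; the argument is elementary and entirely pattern-based. The only minor care required is for the endpoint cases: when $k=1$, step two is vacuous and step three gives $\pi=\iota_n=\pi_{1,n-1}$; when $k=n$, step three is vacuous and step two gives $\pi=\iotax_n=\pi_{n,0}$. As a consistency check, the right-hand side has cardinality $n$, which matches $|\fSn(\permCCC)|=n$ as established by \citet{SS}; this offers an alternative route in which one verifies only the forward inclusion and then concludes by counting.
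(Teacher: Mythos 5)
Your argument is correct and complete. Note, however, that the paper itself gives no proof of this proposition: it is quoted directly from \citet{SS} (their Proposition $16^*$), so there is nothing internal to compare against. Your two-sided verification is the natural elementary one, and every step checks out: the forward inclusion by the four-way case split on how a triple of positions distributes over the decreasing head and increasing tail (yielding only the patterns $321$, $213$, $123$), and the converse by the three structural reductions driven by $231$, $312$ and $132$ respectively. The only cosmetic point is in Step three: your witness ``a value below $k$'' does not exist when $k=1$, but $\pi_1=k$ itself is below every tail value, so $(\pi_1,\pi_i,\pi_j)$ already gives the $132$ for every $k\ge1$; your endpoint discussion effectively covers this. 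An alternative route, which the paper hints at by the remark ``Cf.\ \refPs{PD} and \ref{PB}, which characterize supersets,'' is to start from \refP{PB}: since $\set{\permCCC}\supset\set{\permB}$, every block of $\pi$ is decreasing, and then avoidance of $132$ forces every block after the first to have length $1$ (a descent inside a later block together with any entry of the first block is a $132$), which gives $\pi=\iotax_k*\iota_{n-k}$ at once. That route is slightly shorter because it reuses an already-established structure theorem; yours is self-contained. Your closing observation that the forward inclusion plus the count $|\fSn(\permCCC)|=n$ from \citet{SS} suffices is also valid, since the $n$ permutations $\pi_{k,n-k}$ are pairwise distinct (they have distinct first entries).
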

Cf.\ \refPs{PD} and \ref{PB}, which characterize supersets.
(Equivalently, $\pi\in\fSx(\permCCC)$ if the first block is decreasing and
all other blocks have length 1.)

Hence, the random $\pinCCC=\pi_{K,n-K}$, where $K\in[n]$ is uniformly
random.
Obviously, as \ntoo,
\begin{equation}\label{limCCC}
  K/n\dto U\sim \Uoi.
\end{equation}
Furthermore, if $\gs=\pi_{k,\ell}$, then it is easy to see that
\begin{equation}\label{nsCCC}
  \ns\bigpar{\pi_{K,n-K}}=
  \begin{cases}
\binom{K}{k}\binom{n-K}{\ell}, & k\ge2,
\\[5pt]
K\binom{n-K}{\ell}+\binom{n-K}{\ell+1}, & k=1.
  \end{cases}
\end{equation}

\begin{theorem}\label{TCCC}
Let $\gs\in\fSx(\permCCC)$. 
Then   the following hold as \ntoo,
with $U\sim\Uoi$.
  \begin{romenumerate}
    \item \label{TCCCa}
If $\gs=\pi_{k,m-k}$ with $2\le k\le m$, then
\begin{equation}\label{tccca}
  n^{-m}\ns(\pinCCC) \dto 
W_{k,m-k}:=\frac{1}{k!\,(m-k)!} U^k (1-U)^{m-k}.
\end{equation}
\item \label{TCCCb}
If $\gs=\pi_{1,m-1}=\iota_{m}$, then
\begin{equation}\label{tcccb}
  \begin{split}
  n^{-m}\ns(\pinx{\permCCC}) \dto 
W_{1,m-1}&:=\frac{1}{(m-1)!}U(1-U)^{m-1}+\frac{1}{m!}(1-U)^m
\\&\phantom:
=\frac{1}{m!}\bigpar{1+(m-1)U}(1-U)^{m-1}.
  \end{split}
\end{equation}
  \end{romenumerate}
Moreover, these hold jointly
  for any set of such $\gs$, and with convergence of all moments.
In particular, in case \ref{TCCCa},
\begin{equation}\label{tcccae}
  n^{-m}\E\ns(\pinx{\permCCC}) \dto 
\E W_{k,m-k}=\frac{1}{(m+1)!},
\qquad k\ge2,
\end{equation}
and in case \ref{TCCCb},
\begin{equation}\label{tcccbe}
  n^{-m}\E\ns(\pinx{\permCCC}) \dto 
\E W_{1,m-1}=\frac{2}{(m+1)!}.
\end{equation}
\end{theorem}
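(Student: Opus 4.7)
The plan is to derive Theorem \ref{TCCC} directly from the explicit representation $\pinCCC = \pi_{K,n-K}$ with $K$ uniform on $[n]$, the convergence $K/n \dto U \sim \Uoi$ in \eqref{limCCC}, and the closed-form \eqref{nsCCC} for $\ns\bigpar{\pi_{K,n-K}}$, all combined via the continuous mapping theorem.

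For part \ref{TCCCa}, with $\gs = \pi_{k,m-k}$ and $k\ge 2$, the first line of \eqref{nsCCC} gives $\ns(\pinCCC) = \binom{K}{k}\binom{n-K}{m-k}$. Since $\binom{x_n}{j}/n^j \to u^j/j!$ whenever $x_n/n\to u\in[0,1]$, and since both $\binom{K}{k}/n^k$ and $\binom{n-K}{m-k}/n^{m-k}$ are bounded by absolute constants, applying the continuous mapping theorem to \eqref{limCCC} yields
\begin{equation}
n^{-m}\ns(\pinCCC) \dto \frac{1}{k!\,(m-k)!}U^k(1-U)^{m-k},
\end{equation}
which is \eqref{tccca}. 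For part \ref{TCCCb}, with $\gs = \iota_m = \pi_{1,m-1}$, I would apply the same argument to each of the two summands on the \rhs{} of the second line of \eqref{nsCCC}: $n^{-m}K\binom{n-K}{m-1} \dto U(1-U)^{m-1}/(m-1)!$ and $n^{-m}\binom{n-K}{m}\dto (1-U)^m/m!$; summing these gives \eqref{tcccb}.

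Joint convergence for any family of such $\gs$ is automatic because each limit is a continuous function of the single random variable $U$, so the joint convergence in distribution reduces via continuous mapping to the one-dimensional \eqref{limCCC}. Moment convergence follows by bounded convergence: $n^{-m}\ns(\pinCCC)$ is uniformly bounded by a constant depending only on $m$ (using $\binom{K}{j} \le n^j/j!$ and $K\le n$), and thus all moments of the normalized variables converge to the moments of their limits.

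Finally, the mean formulas \eqref{tcccae} and \eqref{tcccbe} follow from the beta integral $\E U^a(1-U)^b = a!\,b!/(a+b+1)!$ for $U\sim\Uoi$: for $k\ge 2$ this yields $\E W_{k,m-k} = \frac{1}{k!\,(m-k)!}\cdot\frac{k!\,(m-k)!}{(m+1)!} = \frac{1}{(m+1)!}$, and in the $k=1$ case the two summands in \eqref{tcccb} each contribute $\frac{1}{(m+1)!}$, summing to $\frac{2}{(m+1)!}$. I do not anticipate any genuine obstacle in this argument; the only point of mild care is the bookkeeping in the $k=1$ case, where \eqref{nsCCC} acquires a second term corresponding to the fact that an occurrence of $\iota_m$ may either use one element from the descending block $\iotax_K$ together with $m-1$ from the ascending tail, or consist entirely of elements drawn from the tail.
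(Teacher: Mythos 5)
Your proposal is correct and follows essentially the same route as the paper: the paper's proof likewise combines the explicit formula \eqref{nsCCC} with the convergence \eqref{limCCC}, obtains moment convergence from boundedness of the normalized variables, and evaluates the means by standard beta integrals. The only cosmetic difference is that the paper notes the normalized variables are bounded by $1$, whereas you bound them by a constant depending on $m$; either suffices.
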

\begin{proof}
The limits in distribution \eqref{tccca} and \eqref{tcccb} hold 
(with joint convergence)
by \eqref{nsCCC} and \eqref{limCCC}.
Moment convergence holds because the normalized variables in \eqref{tccca} and
\eqref{tcccb} are bounded (by 1).  
Finally, the expectations in \eqref{tcccae}--\eqref{tcccbe} 
are computed by standard beta integrals.
\end{proof}

\begin{corollary}\label{CCCC}
The number of inversions
has the asymptotic distribution
\begin{equation}\label{cccc}
  n^{-2}\nx{\ww}(\pinx{\permCCC}) \dto W:=U^2/2
\end{equation}
with $U\sim\Uoi$.
Thus, $2W\sim B(\frac12,1)$, and $W$ has moments
\begin{equation}\label{ewr}
  \E W^r = \frac{1}{2^r(2r+1)}, \qquad r>0.
\end{equation}
\end{corollary}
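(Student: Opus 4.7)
The plan is to read this off as a direct specialization of Theorem~\ref{TCCC}\ref{TCCCa}. Note that $\ww = 21 = \iotax_2 = \iotax_2 * \iota_0 = \pi_{2,0}$ in the notation of \eqref{piCCC}, so it falls under case \ref{TCCCa} with $k = 2$ and $m = 2$. Plugging into \eqref{tccca} yields
\begin{equation*}
  n^{-2}\nx{\ww}(\pinCCC) \dto W_{2,0} = \frac{1}{2!\,0!}U^{2}(1-U)^{0} = \frac{U^2}{2},
\end{equation*}
which is exactly \eqref{cccc}. The moment convergence clause of \refT{TCCC} is not needed here for the distributional statement, though it will justify the moment formula below.

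Next, I would identify the distribution of $2W = U^2$. For $0 \le t \le 1$,
\begin{equation*}
  \P(U^2 \le t) = \P(U \le \sqrt t) = \sqrt t,
\end{equation*}
so $U^2$ has density $\tfrac12 t^{-1/2}$ on $(0,1)$, which is precisely the density of the $B(\tfrac12, 1)$ distribution (since $B(\tfrac12,1) = \gG(\tfrac12)\gG(1)/\gG(\tfrac32) = 2$, giving density $t^{-1/2}/2$). Hence $2W \sim B(\tfrac12, 1)$, as claimed.

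Finally, the moments in \eqref{ewr} follow immediately from $\E U^{2r} = 1/(2r+1)$:
\begin{equation*}
  \E W^r = 2^{-r}\E U^{2r} = \frac{1}{2^r(2r+1)}.
\end{equation*}
There is no real obstacle here; the only subtlety is recognizing that the pattern $\ww = 21$ corresponds to $\pi_{2,0}$ and therefore sits in case~\ref{TCCCa} of \refT{TCCC} rather than case~\ref{TCCCb} (which would have required $k = 1$).
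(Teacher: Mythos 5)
Your proposal is correct and follows exactly the paper's own route: identify $21=\pi_{2,0}$, specialize Theorem~\ref{TCCC}\ref{TCCCa} with $k=m=2$ to get $W=U^2/2$, and then obtain the Beta identification and moments by the elementary calculations that the paper leaves as ``simple calculations.'' No gaps; your explicit computation of the density of $U^2$ and of $\E U^{2r}$ is precisely what the paper implicitly relies on.
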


\begin{proof}
 We have $21=\pi_{2,0}$ by \eqref{piCCC}, and \eqref{tccca} yields \eqref{cccc}.
The remaining statements follow by simple calculations.
\end{proof}

\section{Avoiding \set{\permBBB}} 
\label{S3b}

In this section we avoid $\set{\permBBB}$.
Equivalent sets are
\set{123,132,231},
\set{123,213,312},
\set{213,312,321},
\set{123,132,312},
\set{123,213,231},
\set{132,312,321},
\set{213,231,321}.

It was shown by \citet{SS} that $|\fSn(\permBBB)|=n$,
together with the following characterization (in an equivalent form).
In this section, let
\begin{equation}\label{piBBB}
  \pi_{k,\ell}:=(k,1,\dots,k-1,k+1,\dots,k+\ell)\in\fS_{k+\ell},
\qquad k\ge1,\,\ell\ge0.
\end{equation}
Note that $\pi_{k,\ell}$ equals $\pi_{1,k-1,\ell}$ in the notation \eqref{pie} of
\refS{SE} if $k\ge2$, and $\iota_{1+\ell}$ if $k=1$.

\begin{prop}[{\cite[Proposition $16^*$]{SS}}]\label{PBBB}
  \begin{equation*}
 \fSn(\permBBB)=\set{\pi_{k,n-k}:1\le k\le n}.
\qedtag
  \end{equation*}
\end{prop}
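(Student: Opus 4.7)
The plan is to establish both inclusions of the set equality.

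For the inclusion $\set{\pi_{k,n-k}:1\le k\le n}\subseteq\fSn(\permBBB)$, I would observe that the permutation $\pi_{k,n-k}=(k,1,2,\dots,k-1,k+1,\dots,n)$ has the property that its restriction to positions $2,3,\dots,n$ is increasing. Thus any three positions $2\le i<j<\ell$ produce the pattern $\perm{123}$, and any forbidden length-$3$ pattern would need to involve position $1$. Inspecting the triples $(\pi_1,\pi_i,\pi_j)=(k,\pi_i,\pi_j)$ with $2\le i<j$ (where $\pi_i<\pi_j$ since positions $\ge2$ are increasing), the only realized patterns are $\perm{123}$, $\perm{213}$, or $\perm{312}$, depending on whether $k$ is less than, between, or greater than $\pi_i$ and $\pi_j$. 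None of these is in $\set{\permBBB}$.

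For the converse inclusion, let $\pi\in\fSn(\permBBB)$ and set $k:=\pi_1$. The key claim is that $\pi_2<\pi_3<\cdots<\pi_n$; once this is shown, the values in positions $2,\dots,n$ are the elements of $[n]\setminus\set{k}$ in increasing order, so $\pi=\pi_{k,n-k}$. I argue by contradiction: assume there exist $2\le i<j\le n$ with $\pi_i>\pi_j$, and examine the triple of values $(k,\pi_i,\pi_j)$ at positions $(1,i,j)$. Three mutually exclusive subcases arise based on how $k$ compares to $\pi_i$ and $\pi_j$:
\begin{itemize}
\item if $k<\pi_j<\pi_i$, the triple has pattern $\perm{132}$;
\item if $\pi_j<k<\pi_i$, it has pattern $\perm{231}$;
\item if $\pi_j<\pi_i<k$, it has pattern $\perm{321}$.
\end{itemize}
Each contradicts $\pi\in\fSn(\permBBB)$, so no such descent exists.

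There is no real obstacle here beyond organizing the case analysis cleanly; the content is elementary. The mild subtlety is merely ensuring the three cases above genuinely cover every possibility for where $k$ lies relative to $\pi_i,\pi_j$ (which they do, since $k\ne\pi_i,\pi_j$ as $k$ occupies position $1$).
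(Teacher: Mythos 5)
Your proof is correct: both inclusions are established, the three cases in each direction exhaust the possible positions of $k=\pi_1$ relative to $\pi_i$ and $\pi_j$ (the three values being distinct), and the identification $\pi=\pi_{k,n-k}$ follows once positions $2,\dots,n$ are known to carry $[n]\setminus\set{k}$ in increasing order. Note, however, that the paper gives no proof of this proposition at all --- it is imported verbatim from Simion and Schmidt \cite{SS} --- so there is no internal argument to compare against. The only methodological hint in the text is the remark ``Cf.\ \refP{PE}'', which points to an alternative route: start from the known description of the superset $\fSn(\perm{132},\perm{321})$ as the permutations $\pi_{k,\ell,m}$ with three prescribed increasing runs together with $\iota_n$, and then discard those containing $\perm{231}$ (any $\pi_{k,\ell,m}$ with $k\ge2$ contains the occurrence $(\ell+1,\ell+2,1)$), which leaves exactly the claimed list. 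Your direct two-way case analysis is elementary, self-contained, and at least as short.
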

Cf.\ \refP{PE}, which characterizes a superset.

Hence, the random $\pinBBB=\pi_{K,n-K}$, where $K\in[n]$ is uniformly
random.
Obviously, as \ntoo, \eqref{limCCC} holds in this case too.
Furthermore, if $\gs=\pi_{k,\ell}$, then it is easy to see,
\eg{} by \eqref{pet}--\eqref{pett}, that
\begin{equation}\label{nsBBB}
  \ns\bigpar{\pi_{K,n-K}}=
  \begin{cases}
\binom{K-1}{k-1}\binom{n-K}{\ell}, & k\ge2,
\\[5pt]
\binom{n-1}{\ell+1}+\binom{n-K}{\ell}, & k=1.
  \end{cases}
\end{equation}

\begin{theorem}\label{TBBB}
Let $\gs\in\fSx(\permBBB)$. 
Then   the following hold as \ntoo,
with $U\sim\Uoi$.
  \begin{romenumerate}
    \item \label{TBBBa}
If $\gs=\pi_{k,m-k}$ with $2\le k\le m$, then
\begin{equation}\label{tbbba}
  n^{-(m-1)}\ns(\pinBBB) \dto 
W_{k,m-k}:=\frac{1}{(k-1)!\,(m-k)!} U^{k-1} (1-U)^{m-k}.
\end{equation}
\item \label{TBBBb}
If $\gs=\pi_{1,m-1}=\iota_{m}$, then
\begin{equation}\label{tbbbb}
  \begin{split}
  n^{-m}\ns(\pinx{\permBBB}) =\frac{1}{m!}+O\bigpar{n\qw} \pto \frac{1}{m!}.
  \end{split}
\end{equation}
  \end{romenumerate}
Moreover, these hold jointly
  for any set of such $\gs$, and with convergence of all moments.
In particular, in case \ref{TBBBa},
\begin{equation}\label{tbbbae}
  n^{-(m-1)}\E\ns(\pinx{\permBBB}) \dto 
\E W_{k,m-k}=\frac{1}{m!},
\qquad k\ge2.
\end{equation}
\end{theorem}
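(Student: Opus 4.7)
The plan is to parallel the short proof of \refT{TCCC} from the previous section, using the parametrization $\pinBBB=\pi_{K,n-K}$ with $K$ uniformly distributed on $[n]$ supplied by \refP{PBBB}, together with the convergence $K/n\dto U\sim\Uoi$ noted just before the theorem.

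The core of the argument is to read off asymptotics directly from the exact formulas in \eqref{nsBBB}. In case \ref{TBBBa}, with $\gs=\pi_{k,m-k}$ and $k\ge2$, the elementary uniform estimate $\binom{x}{j}=x^j/j!+O(x^{j-1})$ for $0\le x\le n$ gives
\begin{equation*}
\frac{\binom{K-1}{k-1}\binom{n-K}{m-k}}{n^{m-1}}
=\frac{(K/n)^{k-1}(1-K/n)^{m-k}}{(k-1)!\,(m-k)!}+O(n\qw),
\end{equation*}
and the continuous mapping theorem applied to $K/n\dto U$ yields \eqref{tbbba}. In case \ref{TBBBb}, with $\gs=\iota_m=\pi_{1,m-1}$ so $\ell=m-1$, the $k=1$ branch of \eqref{nsBBB} gives $\ns(\pinBBB)=\binom{n-1}{m}+\binom{n-K}{m-1}$; the first summand is the deterministic quantity $n^m/m!+O(n^{m-1})$, while the second is bounded uniformly in $K$ by $\binom{n}{m-1}=O(n^{m-1})$, so dividing by $n^m$ yields \eqref{tbbbb}.

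Joint convergence over any set of $\gs\in\fSx(\permBBB)$ is automatic, since each normalized $\ns(\pinBBB)$ is, up to an $O(n\qw)$ deterministic error, a continuous function of the single random variable $K/n$. Convergence of all moments follows by bounded convergence: the normalized variables are uniformly bounded, e.g.\ $\binom{K-1}{k-1}\binom{n-K}{m-k}/n^{m-1}\le\binom{n}{k-1}\binom{n}{m-k}/n^{m-1}=O(1)$ in case \ref{TBBBa}, while case \ref{TBBBb} is already deterministic up to a vanishing error. Finally, \eqref{tbbbae} reduces to the standard beta integral $\E U^{k-1}(1-U)^{m-k}=B(k,m-k+1)=(k-1)!\,(m-k)!/m!$, which exactly cancels the prefactor of $W_{k,m-k}$.

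There is essentially no obstacle here: once \refP{PBBB} and formula \eqref{nsBBB} are granted, the result is pure bookkeeping analogous to the proof of \refT{TCCC}, the only substantive change being the different binomials (e.g.\ $\binom{K-1}{k-1}$ in place of $\binom{K}{k}$) that reflect the modified block structure of permutations in $\fSx(\permBBB)$.
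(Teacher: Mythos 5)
Your proposal is correct and follows essentially the same route as the paper, which proves \refT{TBBB} by appealing to the exact formula \eqref{nsBBB} together with $K/n\dto U$ and the argument of \refT{TCCC} (continuous mapping for the distributional limit, boundedness of the normalized variables for moment convergence, and a beta integral for the expectation). You have simply written out the bookkeeping that the paper leaves implicit, and all the details check out.
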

\begin{proof}
By \eqref{nsBBB} and \eqref{limCCC},
similarly to the proof of \refT{TCCC}.
\end{proof}

\begin{corollary}\label{CBBB}
The number of inversions
$\ninv(\pinBBB)$ has a uniform distribution on \set{0,\dots,n-1}, and thus  
the asymptotic distribution
\begin{equation}\label{bbbc}
  n^{-1}\ninv(\pinx{\permBBB}) \dto U\sim\Uoi.
\end{equation}
\end{corollary}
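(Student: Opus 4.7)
The plan is to apply the general formula \eqref{nsBBB} to the specific case $\gs=\perm{21}$ and observe that the result is linear in $K$, which makes the distribution of $\ninv(\pinBBB)$ literally a translation of the uniform distribution of $K$ on $[n]$.

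First I would write $\perm{21}=\pi_{2,0}$ in the notation \eqref{piBBB}, so that $k=2\ge 2$ and $\ell=0$. Then \eqref{nsBBB} gives
\begin{equation*}
\ninv(\pi_{K,n-K}) = \binom{K-1}{1}\binom{n-K}{0} = K-1.
\end{equation*}
As a sanity check (which I would include in one line), the permutation $\pi_{K,n-K}=(K,1,2,\dots,K-1,K+1,\dots,n)$ has its leading entry $K$ greater than each of $1,\dots,K-1$ at positions $2,\dots,K$ and otherwise is increasing, so it has exactly $K-1$ inversions.

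Since $K$ is uniformly distributed on $\set{1,\dots,n}$, the random variable $\ninv(\pinBBB)=K-1$ is uniformly distributed on $\set{0,\dots,n-1}$. The asymptotic statement \eqref{bbbc} then follows immediately from \eqref{limCCC}, which gives $K/n\dto U\sim\Uoi$, and hence $n\qw\ninv(\pinBBB)=(K-1)/n=K/n+O(n\qw)\dto U$. There is no genuine obstacle here; the only thing to check is the trivial step of specializing \eqref{nsBBB}.
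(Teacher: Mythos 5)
Your proof is correct and takes essentially the same route as the paper's one-line argument: identify $\perm{21}=\pi_{2,0}$ in the notation \eqref{piBBB}, specialize \eqref{nsBBB} to get $\ninv(\pi_{K,n-K})=K-1$, and conclude from the uniformity of $K$ on $[n]$ together with \eqref{limCCC}. (Incidentally, the paper's proof writes ``$12=\pi_{2,0}$'', a typo for $21$; your identification, and your direct sanity check counting the inversions of $(K,1,\dots,K-1,K+1,\dots,n)$, are correct.)
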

\begin{proof}
By \eqref{piBBB}, $12=\pi_{2,0}$, and thus 
\eqref{nsBBB} yields $\ninv(\pi_{K,n-K})=K-1$.
\end{proof}

\section{Avoiding \set{\permEEE}}  
\label{S3e} 
In this section we avoid $\set{\permEEE}$.
An
equivalent sets is
\set{123,231,312}.

It was shown by \citet{SS} that $|\fSn(\permEEE)|=n$,
together with the following characterization (in an equivalent form).
In this section, let
\begin{equation}\label{piEEE}
  \pi_{k,\ell}:=(\ell+1,\dots,\ell+k,1,\dots,\ell)\in\fS_{k+\ell},
\qquad k\ge1,\,\ell\ge0.
\end{equation}
Note that $\pi_{k,\ell}$ equals $\pi_{k,\ell,0}$ in the notation \eqref{pie} of
\refS{SE} if $\ell\ge1$, and $\iota_{k}$ if $\ell=0$.

\begin{prop}[{\cite[Proposition $16^*$]{SS}}]\label{PEEE}
  \begin{equation*}
 \fSn(\permEEE)=\set{\pi_{k,n-k}:1\le k\le n}.
\qedtag
  \end{equation*}
\end{prop}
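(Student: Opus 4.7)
The plan is to combine the counting result $|\fSn(\permEEE)|=n$ of \cite{SS} with a direct verification that the $n$ permutations listed on the \rhs{} of the claimed equality are distinct members of $\fSn(\permEEE)$; equality of two finite sets of equal cardinality, one contained in the other, then gives the result. This matches the style used in the proofs of \refPs{PCCC} and \ref{PBBB}.

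First, I would note that the $n$ permutations $\pi_{k,n-k}$, $1\le k\le n$, are pairwise distinct, since by \eqref{piEEE} the first entry of $\pi_{k,n-k}$ equals $n-k+1$, which determines $k$. Second, I would check by a short case analysis that each $\pi_{k,n-k}\in\fSn(\permEEE)$. The key structural observation is that $\pi_{k,\ell}$ splits into two increasing runs — positions $1,\dots,k$ carrying the top $k$ values $\ell+1,\dots,\ell+k$, and positions $k+1,\dots,k+\ell$ carrying the bottom $\ell$ values $1,\dots,\ell$ — so that every value in the first run exceeds every value in the second. For any three positions $i_1<i_2<i_3$, split by how many fall into each run: the four cases $(3,0)$, $(2,1)$, $(1,2)$, $(0,3)$ produce patterns $\perm{123}$, $\perm{231}$, $\perm{312}$, $\perm{123}$ respectively. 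Since $\fS_3\setminus\set{\perm{123},\perm{231},\perm{312}}=\set{\perm{132},\perm{213},\perm{321}}$, none of the forbidden patterns can occur.

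Combining these two observations with $|\fSn(\permEEE)|=n$ from \cite[Proposition $16^*$]{SS} (quoted in the preceding text) yields the claimed equality. The only step that requires any argument at all is the pattern check, which is the routine three-case analysis sketched above; there is no real obstacle. Alternatively, one could give a proof independent of the enumeration by starting from an arbitrary $\pi\in\fSn(\permEEE)$, locating the position $j$ of the value $1$, and using the avoidances of $\perm{213}$, $\perm{321}$, $\perm{132}$ to show in turn that entries after position $j$ are below $\pi_1$, entries strictly between positions $1$ and $j$ are above $\pi_1$, and both resulting runs are increasing; but this is strictly more work and gives nothing extra, so I would stick with the enumeration route.
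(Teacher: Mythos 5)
Your argument is correct. Note, however, that the paper itself offers no proof of this proposition: it is quoted verbatim from \cite[Proposition $16^*$]{SS} (the \qedtag closes the statement with nothing in between), so there is no internal argument to compare against. What you supply is a sound self-contained verification: the $n$ permutations $\pi_{k,n-k}$ are pairwise distinct (the first entry $n-k+1$ determines $k$), each avoids $\set{\permEEE}$ by your two-run case analysis (the only length-3 patterns realizable in $\pi_{k,\ell}$ are $\perm{123}$, $\perm{231}$, $\perm{312}$, and the degenerate cases $k=n$ and $k=1$ cause no trouble), and the cardinality count $|\fSn(\permEEE)|=n$ from \cite{SS} forces equality. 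The one caveat is that this route still imports the enumeration from \cite{SS}, so it is not independent of that source; your sketched alternative (locating the position of the value $1$ and using the three avoidances to pin down the two increasing runs) is what one would need for a fully self-contained proof, and is essentially how such characterizations are established in \cite{SS} itself. Either way, there is no gap.
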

Cf.\ \refP{PE}, which again characterizes a superset.

Hence, the random $\pinEEE=\pi_{K,n-K}$, where $K\in[n]$ is uniformly
random, and \eqref{limCCC} holds again.
Furthermore, if $\gs=\pi_{k,\ell}$, then it is easy to see,
\eg{} by \eqref{pet}--\eqref{pett}, that
\begin{equation}\label{nsEEE}
  \ns\bigpar{\pi_{K,n-K}}=
  \begin{cases}
\binom{K}{k}\binom{n-K}{\ell}, & \ell\ge1,
\\[5pt]
\binom{K}{k}+\binom{n-K}{k}, & \ell=0.
  \end{cases}
\end{equation}

\begin{theorem}\label{TEEE}
Let $\gs\in\fSx(\permEEE)$. 
Then   the following hold as \ntoo,
with $U\sim\Uoi$.
  \begin{romenumerate}
    \item \label{TEEEa}
If $\gs=\pi_{k,m-k}$ with $1\le k\le m-1$, then
\begin{equation}\label{teeea}
  n^{-m}\ns(\pinEEE) \dto 
W_{k,m-k}:=\frac{1}{k!\,(m-k)!} U^{k} (1-U)^{m-k}.
\end{equation}
\item \label{TEEEb}
If $\gs=\pi_{m,0}=\iota_{m}$, then
\begin{equation}\label{teeeb}
  n^{-m}\ns(\pinEEE) \dto 
W_{m,0}:=\frac{1}{m!} \bigpar{U^{m}+ (1-U)^{m}}.
\end{equation}
  \end{romenumerate}
Moreover, these hold jointly
  for any set of such $\gs$, and with convergence of all moments.
In particular, in case \ref{TEEEa},
\begin{equation}\label{teeeae}
  n^{-m}\E\ns(\pinx{\permEEE}) \dto 
\E W_{k,m-k}=\frac{1}{(m+1)!},
\qquad 1\le k < m,
\end{equation}
and in case \ref{TEEEb},
\begin{equation}\label{teeebe}
  n^{-m}\E\ns(\pinx{\permEEE}) \dto 
\E W_{m,0}=\frac{2}{(m+1)!}.
\end{equation}
\end{theorem}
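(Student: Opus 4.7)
The plan is to mirror the proof strategy used for Theorems~\ref{TCCC} and \ref{TBBB} essentially verbatim, since Proposition~\ref{PEEE} reduces the class $\fSn(\permEEE)$ to the one-parameter family $\pi_{k,n-k}$, and formula \eqref{nsEEE} expresses $\ns(\pinEEE)$ as an explicit function of the single random index $K$.

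First I would record that by \refP{PEEE}, a uniformly random $\pinEEE$ equals $\pi_{K,n-K}$ where $K$ is uniform on $[n]$, so $K/n\dto U\sim\Uoi$. All convergence statements will follow from this single weak limit via the continuous mapping theorem, applied to the explicit closed forms in \eqref{nsEEE}. Specifically:
\begin{itemize}
\item For part \ref{TEEEa}, with $\gs=\pi_{k,m-k}$ and $\ell:=m-k\ge1$, use the first case of \eqref{nsEEE} to write
\begin{equation*}
  \frac{\ns(\pinEEE)}{n^m}
  = \frac{1}{n^k}\binom{K}{k}\cdot\frac{1}{n^{m-k}}\binom{n-K}{m-k}
  \longrightarrow \frac{1}{k!}U^k\cdot\frac{1}{(m-k)!}(1-U)^{m-k}
\end{equation*}
in distribution, since $\binom{Kn}{k}/n^k\to U^k/k!$ and $\binom{n-K}{m-k}/n^{m-k}\to(1-U)^{m-k}/(m-k)!$ by the continuous mapping theorem.
\item For part \ref{TEEEb}, with $\gs=\iota_m=\pi_{m,0}$, use the second case of \eqref{nsEEE} to obtain $\ns(\pinEEE)/n^m\dto\bigpar{U^m+(1-U)^m}/m!$ in the same way.
\end{itemize}

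Joint convergence for any finite collection of $\gs\in\fSx(\permEEE)$ is automatic: every normalized count is a continuous function of $K/n$, so the continuous mapping theorem delivers joint convergence in the product topology without further work. For convergence of all moments, I would simply observe that each normalized count is bounded above by $1/(k!(m-k)!)$ (or $2/m!$ in case \ref{TEEEb}), since $\binom{K}{k}\le n^k/k!$ and $\binom{n-K}{m-k}\le n^{m-k}/(m-k)!$; dominated convergence then upgrades the weak limit to convergence of all moments.

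Finally, the asymptotic expectations \eqref{teeeae} and \eqref{teeebe} come from standard beta integrals: $\E U^k(1-U)^{m-k}=B(k+1,m-k+1)=k!\,(m-k)!/(m+1)!$ gives \eqref{teeeae}, while $\E U^m=\E(1-U)^m=1/(m+1)$ gives \eqref{teeebe}. There is no real obstacle; the only tiny care needed is the case split between $\ell\ge1$ and $\ell=0$ in \eqref{nsEEE}, which accounts for the different form of the limit $W_{m,0}$ in \ref{TEEEb} as compared to \ref{TEEEa}.
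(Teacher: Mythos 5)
Your proposal is correct and follows essentially the same route as the paper, which likewise deduces the theorem from \eqref{nsEEE} and the convergence $K/n\dto U$, obtains moment convergence from boundedness of the normalized counts, and evaluates the expectations by beta integrals. (The only blemish is the typo $\binom{Kn}{k}$ for $\binom{K}{k}$, which does not affect the argument.)
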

\begin{proof}
By \eqref{nsEEE} and \eqref{limCCC},
similarly to the proof of \refT{TCCC}.
\end{proof}

\begin{corollary}\label{CEEE}
The number of inversions
has the asymptotic distribution
\begin{equation}\label{ceee}
  n^{-2}\nx{\ww}(\pinx{\permEEE}) \dto W:=U(1-U),
\end{equation}
with $U\sim\Uoi$.
Thus, $4W\sim B(1,\frac12)$, and $W$ has moments
\begin{equation}\label{ewr}
  \E W^r = \frac{\gG(r+1)^2}{\gG(2r+2)}, \qquad r>0.
\end{equation}
\end{corollary}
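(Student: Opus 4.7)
The plan is to deduce this corollary directly from case \ref{TEEEa} of \refT{TEEE}. First I would note via \eqref{piEEE} that $\pi_{1,1}=(2,1)=\perm{21}$, so the inversion pattern falls under case \ref{TEEEa} with $k=1$ and $m=2$; substituting into \eqref{teeea} immediately gives \eqref{ceee} with $W_{1,1}=U(1-U)$. Joint convergence with any other patterns, and convergence of all moments, carry over from \refT{TEEE}.

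For the distributional identification, I would compute the law of $W=U(1-U)$ by a direct change of variables. The map $u\mapsto u(1-u)$ is $2$-to-$1$ from $(0,1)$ onto $(0,1/4)$ with preimages $(1\pm\sqrt{1-4w})/2$, so a short calculation gives $\P(W\le w)=1-\sqrt{1-4w}$ and hence density $f_W(w)=2(1-4w)^{-1/2}$ on $(0,1/4)$. After the linear rescaling $v=4w$, the variable $4W$ has density $\tfrac12(1-v)^{-1/2}$ on $(0,1)$, which is precisely the $B(1,\tfrac12)$ density (the normalizing constant being $\gG(1)\gG(\tfrac12)/\gG(\tfrac32)=2$).

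The moment formula then follows at once from the classical beta integral: $\E W^r=\E U^r(1-U)^r=\gG(r+1)^2/\gG(2r+2)$. There is no real obstacle here; the only point requiring a moment's attention is applying case \ref{TEEEa} of \refT{TEEE} rather than case \ref{TEEEb}, since $\pi_{1,1}$ has $\ell=1>0$ and so is not the identity pattern.
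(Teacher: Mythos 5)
Your proposal is correct and follows essentially the same route as the paper: identify $\perm{21}=\pi_{1,1}$ and apply case \ref{TEEEa} of \refT{TEEE} with $k=1$, $m=2$, then identify the law of $W$ and compute moments via a beta integral. The only cosmetic difference is that the paper derives $4W\sim B(1,\tfrac12)$ from the substitution $4W=1-(2U-1)^2$ rather than your two-to-one change of variables, which is an equivalent elementary calculation.
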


\begin{proof}
 We have $21=\pi_{1,1}$ by \eqref{piEEE}, 
and thus \eqref{teeea} yields \eqref{ceee}.
The remaining statements follow by simple calculations, using $4W=1-(2U-1)^2$
and a beta integral.
\end{proof}

\newcommand\AMS{Amer. Math. Soc.}
\newcommand\Springer{Springer-Verlag}
\newcommand\Wiley{Wiley}

\newcommand\vol{\textbf}
\newcommand\jour{\emph}
\newcommand\book{\emph}
\newcommand\inbook{\emph}
\def\no#1#2,{\unskip#2, no. #1,} 
\newcommand\toappear{\unskip, to appear}

\newcommand\arxiv[1]{\url{arXiv:#1.}}
\newcommand\arXiv{\arxiv}

\def\nobibitem#1\par{}

\end{document}